\documentclass[10pt, reqno]{amsart}


\usepackage[applemac]{inputenc}
\usepackage{amsmath}
\usepackage{amssymb}
\usepackage{mathrsfs}
\usepackage{graphics}
\usepackage[dvipdf]{graphicx}
\usepackage{geometry}
\usepackage{setspace}
\usepackage{amsthm}
\usepackage{paralist}
\usepackage{dsfont}
\usepackage{verbatim}
\usepackage{bm}
\usepackage{hyperref}

\usepackage[numbers]{natbib}

\geometry{margin=1in}
	
\renewcommand{\tilde}{\widetilde}
\renewcommand{\bar}{\overline}

\theoremstyle{definition}
\newtheorem{definition}{Definition}[section]
\newtheorem{remark}[definition]{Remark}
\newtheorem{assumption}[definition]{Assumption}
\theoremstyle{plain}
\newtheorem{lemma}[definition]{Lemma}
\newtheorem{thm}[definition]{Theorem}
\newtheorem{prop}[definition]{Proposition}

\numberwithin{equation}{section}



\newcommand\R{\mathbb{R}}
\newcommand\Ru{{\mathbb{R} \cup \left\{\infty\right\}}}
\newcommand\Rd{{\mathbb{R}^d}}


\newcommand\ninN{{n\in\mathbb{N}}}
\newcommand\kinN{{k\in\mathbb{N}}}
\newcommand\ntoinf{{n\rightarrow \infty}}


\newcommand\rk{\dens^{k}_{\tau}}
\newcommand\rkm{\dens^{k\minus 1}_{\tau}}
\newcommand\rkzm{\dens^{k\minus 2}_{\tau}}

\newcommand\rN{\dens^{N}_{\tau}}


\newcommand{\nrg}{{\mathcal{F}}}
\newcommand{\tW}{{\mathcal{W}}}
\newcommand{\tV}{\mathcal{V}}
\newcommand{\heat}{{\mathcal{H}}}
\newcommand{\tU}{{\mathcal{U}}}
\newcommand{\calA}{\mathcal{A}}
\newcommand{\W}{\mathbf{W}_2}
\newcommand{\M}{\mathbf{M}_2}

\newcommand{\dens}{\rho}
\newcommand{\denses}{{\mathscr{P}_2(\Omega)}}
\newcommand{\Leins}{{L^1(\Omega)}}
\newcommand{\Lm}{{L^m(\Omega)}}
\newcommand{\BV}{BV(\Omega)}
\newcommand{\Km}{\mathcal{K}_m}

\newcommand{\minus}{\scalebox{0.4}[0.9]{$-$}}

\newcommand{\N}{{\mathbb{N}}}

\newcommand{\bp}{{\bm{p}}}
\newcommand{\bq}{\bm{q}}
\newcommand{\X}{{\bm{X}}}
\newcommand{\dd}{\,\mathrm{d}}


\begin{document}


\begin{abstract}
We prove convergence of a variational formulation of the BDF2 method applied to the non-linear Fokker-Planck equation.
Our approach is inspired by the JKO-method and exploits the differential structure of the underlying $L^2$-Wasserstein space.
The technique presented here extends and strengthens the results of our own recent work on the BDF2 method for general metric gradient flows in the special case of the non-linear Fokker-Planck equation: 
firstly, we do not require uniform semi-convexity of the augmented energy functional;
secondly, we prove strong instead of merely weak convergence of the time-discrete approximations;
thirdly, we directly prove without using the abstract theory of curves of maximal slope that the obtained limit curve is a weak solution of the non-linear Fokker-Planck equation.
\end{abstract}


\title[A BDF2-Approach for the non-linear Fokker-Planck Equation]{A BDF2-Approach for the non-linear Fokker-Planck Equation}
\author[Simon Plazotta]{Simon Plazotta}
\address{Zentrum f\"ur Mathematik \\ Technische Universit\"at M\"unchen \\ 85747 Garching, Germany}
\email{plazotta@ma.tum.de}
\keywords{gradient flow, second order scheme, BDF2, minimizing movements, non-linear diffusion equations}
\thanks{This research has been supported by the German Research Foundation (DFG), SFB TRR 109. The authors would like to thank Daniel Matthes for helpful discussions and remarks.}
\date{\today}
\subjclass[2010]{34G25, 35A15, 35G25, 35K46, 65L06, 65J08}
\maketitle


\section{Introduction}\label{sec:Intro}
This article is concerned with the proof of well-posedness and convergence of a formally higher-order semi-discretization in time, inspired by the Backward Differentiation Formula 2 (BDF2), applied to the \emph{non-linear Fokker-Planck equation with no-flux boundary condition}:
\begin{align}	\label{eq:FP}
\begin{split}
	\partial _t \dens   = \Delta (\dens^m)  + \mathrm{div} \left(\dens \nabla V  \right) + \mathrm{div} \left(\dens \nabla  (W \ast \dens) \right) \qquad \text{in } (0,\infty)\times\Omega , \\
	\bm{n} \cdot \operatorname{D}\dens = 0, \quad \text{on } (0,\infty)\times\partial\Omega, \qquad \qquad	\dens(0,x)  = \dens^0(x) \quad \text{in } \Omega.
\end{split}
\end{align}
We consider \eqref{eq:FP} as an evolutionary equation in the space of probability measures $\denses$ with finite second moment (i.e $\M(\mu) := \int_{\Omega} \left\|x\right\|^2  \dd \mu(x) < \infty$), where $\Omega=\Rd$ or $\Omega \subset\Rd$ is an open and bounded domain with Lipschitz-continuous boundary $\partial\Omega$ and normal derivative $\bm{n}$.
Indeed, if \eqref{eq:FP} is initialized with $\dens^0 \in\denses$ then there exists a weak solution $\dens:[0,\infty)\times\Omega \to \R_{\geq 0}$ such that $\dens(0) =\dens^0$ and $\dens(t) \in \denses$ for each $t>0$.
 
The modern approach towards the theoretical analysis of equation \eqref{eq:FP} is the gradient flow structure in the $L^2$-Wasserstein space $(\denses,\W)$, see \cite{ags,jko,otto2001,santambrogio2015optimal,villani,villani2008optimal}. 
The $L^2$-Wasserstein distance $\W$ between two measures $\mu$ and $\nu$ in $\denses$ is defined by
\begin{align}\label{eq:Wasserstein}
\W^2(\mu,\nu):= \min_{\bp \in \Gamma(\mu,\nu)} \ \  \int_{\Omega^2} \left\|x-y\right\|^2\, \mathrm{d} \bp(x,y),
\end{align}
where $\Gamma(\mu,\nu):= \{ \bp \in \mathscr{P}(\Omega \times \Omega) :(\pi^1)_\# \bp = \mu, \  (\pi^2)_\# \bp = \nu\}$ is the set of all transport plans from $\dens$ to $\nu$.
Note, the minimizers $\bp \in \Gamma(\mu,\nu)$ of $\W(\mu,\nu)$ are called the \emph{optimal transport plans}.
The corresponding energy functional \mbox{$\nrg:\denses\to\Ru$} for \eqref{eq:FP} is given by: 
\begin{align}\label{eq:FPnrg}
	\nrg(\mu):=\begin{cases}
		\int_\Omega \dens \log(\dens) +  V \dens + \frac{1}{2}(W\ast \dens)\dens \dd x & \text{if }m=1, \\
		\int_\Omega \frac1{m-1}\dens^m +  V \dens + \frac{1}{2}(W\ast \dens)\dens \dd x & \text{if }m>1,
	\end{cases}
\end{align}
provided that $\mu= \dens \mathcal{L}^d$ and the integrals on the right-hand side are well-defined otherwise we set $\nrg(\mu)=\infty$.

In the framework of $L^2$-Wasserstein gradient flows, existence of solutions has been shown via the JKO-scheme, named after the authors of \cite{jko}.
This scheme is a variational formulation of the Implicit Euler method given as follows:
for fixed time step size $\tau \in(0,\tau_*)$ construct inductively, starting from $\dens^0$, a sequence of probability measures $(\rk)_\kinN$ as the minimizer of an augmented energy functional:
\begin{align} \label{eq:JKO}
	\rk \in  \underset{\dens\in\denses}{\text{argmin}}  \ \frac1{2\tau} \W^2(\dens,\rkm) + \nrg(\dens).
\end{align}
It is known that the thus obtained discrete gradient approximation converge to a solution of the non-linear Fokker-Planck equation \eqref{eq:FP} as $\tau$ tends to zero. 

Note, this scheme has been similarly applied to a variety of PDEs and systems of PDEs with gradient flow structure in the $L^2$-Wasserstein or in a $L^2$-Wasserstein-like space:
 non-local Fokker-Planck equations \cite{carrillo2011,di2018nonlinear,topaz2006nonlocal};
  Fokker-Planck equations on manifolds \cite{erbar2010heat,sturm2005convex};
  fourth order fluid and quantum models \cite{GOthinfilm,gianazza2009,MatthesMcCannSavare};
	 chemotaxis systems \cite{BCCkellersegel,blanchet2012,zinsl2015exponential};
	   Poisson-Nernst-Planck equations \cite{Kinderlehrer2016};
	    multi-component fluid systems \cite{laurencot2011};
	    Cahn-Hilliard equations \cite{lisini2012};
	    degenerate cross-diffusion systems \cite{matthes2017existence,zinsl2015transport}.

Besides the theoretical use to construct solution for \eqref{eq:FP}, this particular discretization \eqref{eq:JKO} provides also a structure preserving numerical scheme. 
The approximate solution inherits automatically positivity, mass conservation and energy dissipation.
Different approaches to actually compute the minimizers of \eqref{eq:JKO} have been investigated: 
particle schemes \cite{calvez2016blow,carrillo2015numerical,carrillo2016convergence,westdickenberg2010variational}; 
evolving diffeomorphisms \cite{carrillo2015numerical,carrillo2009numerical};
Lagrangian schemes \cite{benamou2016discretization,during2010gradient,gallouet2016lagrangian,junge2015fully,matthes2014convergence,matthes2017convergent}; 
entropic regularization \cite{peyre2015entropic}.
However, it turns out that the application of these schemes to gradient flows in $L^2$-Wasserstein space is intricate, since computing the $L^2$-Wasserstein distance and its gradient is difficult in dimension two or more. \\

We proposed in our own recent work \cite{matthes2017variational} a different variational formulation of a semi-discretization in time, i.e., of the Backward Differentiation Formula 2 (BDF2) method. 
In this context the BDF2 method reads as follows: 
for each sufficiently small time step $\tau \in (0,\tau_*)$, let a pair of initial data $(\dens_\tau^{\minus 1},\dens_\tau^0)$ be given that approximate $\dens_0$. 
Then, define inductively the \emph{discrete solution} $(\rk)_\kinN$ as the minimizers of the following augmented energy functional, 
		\begin{align}\label{eq:BDF2}
				\rk \in 	\underset{\dens\in\denses}{\text{argmin}} \ \frac1{ \tau}  \W^2(\rkm,\dens) - \frac1{4 \tau} \W^2(\rkzm,\dens) +  \mathcal{F}(\dens). 
		\end{align}
Similar to the JKO-scheme the BDF2 method is structure preserving in the sense that the discrete solution inherits automatically positivity, mass preservation and is \emph{almost} energy dissipating (see lemma \ref{lem:EnergyDim}). 
We remark that recently also other variational formulations of formally higher-order time discretizations have been investigated, namely Runge-Kutta methods \cite{laguzet:hal-01404619,LEGENDRE2017345}. \\
 		
Our main contribution in this work is to improve the convergence result of \cite{matthes2017variational} from weak to strong convergence of the discrete solution $(\rk)_\kinN$.
Also in contrast to \cite{matthes2017variational}, our approach is independent of the uniform semi-convexity of the augmented energy functional on the right-hand side of \eqref{eq:BDF2}. 
More in the spirit of the original works on the linear Fokker-Planck equation of Kinderlehrer et al. \cite{jko}, we solely utilize the differential structure of both the $L^2$-Wasserstein space and of the augmented energy functional. 

Note, the BDF2 method and the techniques presented here have two further possible applications. 
Firstly, PDEs with gradient flow structure such that the energy function $\nrg$ do not possesses any uniform semi-convexity property -- like the Hele-Shaw equation seen as $L^2$-Wasserstein gradient flow -- are not covered in \cite{matthes2017variational}. 
However, as long as the subdifferential calculus in the $L^2$-Wasserstein space is applicable to $\nrg$ our method is feasible. 
With this technique at hand on can compute from \eqref{eq:BDF2} the discrete Euler-Lagrange equations for the discrete approximation by variations along solutions of the continuity equation (likwise theorem \ref{DiscInequaDens}).
Hence, passing to the limit as $\tau$ tends to zero could yield directly a distributional solution for the aforementioned class of PDEs without using the abstract theory of curves of steepest descent for \mbox{$\lambda$-contractive} gradient flows.
Secondly, the formally higher-order approximation in time is expected to improve the performance of numerical simulations due to the better resolution of the solution with respect to a coarser time grid. 

\emph{In conclusion, the BDF2 method provides a structure preserving numerical scheme of formally higher-order approximation in time with a strong notion of convergence.} \\

Our main results concerning the well-posedness and the limit behavior as $\tau\searrow 0$ of the \emph{interpolated solution} $\bar\dens_\tau$, which is defined as the piecewise constant interpolation in time of the discrete solution $(\rk)_\kinN$ obtained by the BDF2 method \eqref{eq:BDF2},
\begin{align*}
	\bar\dens_\tau(0)=\dens_\tau^0, \qquad \bar\dens_\tau(t)=\rk \qquad \mathrm{for} \ t\in\left((k-1)\tau, k\tau\right] \qquad \mathrm{and} \ \kinN,
\end{align*}
is stated in the following theorem. The threshold $\tau_*$ is specified in \eqref{eq:taustar}.
\begin{thm}\label{thm:mainthm}
	Let $\Omega \subset \Rd$ be either an open and bounded domain with Lipschitz continuous boundary $\partial \Omega$ or let $\Omega=\Rd$. Further, assume $m\geq1$ and that $V$ and $W$ satisfy Assumption \ref{ass:VW}. Given a vanishing sequence $(\tau_n)_\ninN$ of step sizes $\tau_n\in(0,\tau_*)$ and initial data $(\dens_{\tau_n}^{\minus 1},\dens_{\tau_n}^0)$ satisfying Assumption \ref{ass:I}, then the following hold:
	\begin{enumerate}[(i)]
		\item \textbf{Existence of the discrete solutions.} For each step size $\tau \in(0,\tau_*)$ there exists a sequence $(\rk)_\kinN$ obtained by the BDF2 scheme \eqref{eq:BDF2}, which satisfies the step size independent bounds \eqref{eq:classicalbounds} on the kinetic energy, on the internal energy, and on the second moments.
		\item \textbf{Narrow convergence in $\bm{\denses}$.} There exists a (non-relabelled) subsequence $(\tau_n)_\ninN$ and a limit curve $\dens_*\in\mathrm{AC}^2(0,\infty;(\denses,\W))$ such that for any $t\geq 0$:
		\begin{align*}
			\bar\dens_{\tau_n}(t) \rightharpoonup \dens_*(t) \qquad  &\text{narrowly in the space }\denses \text{ as } \ntoinf.
		\end{align*}
	\item \textbf{Step size independent $\bm{L^2(0,T;BV(\Omega))}$-estimate.} For each fixed time horizon $T>0$ there exists a non-negative constant $C$, depending only on $m,V,W$, and $T$ such that for each $\tau \in(0,\tau_*)$: 
		\begin{align*}
			 \left\| (\bar\dens_\tau)^m\right\|_{L^2(0,T;BV(\Omega))} \leq C.
		\end{align*} 
		\item \textbf{Strong convergence in $\bm{L^m(0,T;L^m(\Omega))}$.} With the notations from (ii), there exists a further (non-relabelled) subsequence $(\tau_n)_\ninN$ such that for all $T>0$:
		\begin{enumerate}
		\item In the case of an open and bounded set $\Omega \subset\Rd$ with Lipschitz-continuous boundary $\partial \Omega$, we have	:	\begin{align*}
			\bar\dens_{\tau_n} \rightarrow \dens_* \qquad &\text{in } L^m(0,T;L^m(\Omega)) \text{ as } n \to\infty.
		\end{align*}
		\item In the case of the entire space, i.e., $\Omega=\Rd$, we have for every open and bounded set $\tilde\Omega \Subset \Rd$:
		\begin{align*}
			\bar\dens_{\tau_n} \rightarrow \dens_* \qquad &\text{in } L^m(0,T;L^m(\tilde\Omega)) \text{ as } n \to\infty.
		\end{align*} 
		\end{enumerate}
	\item \textbf{Solution of the non-linear Fokker-Planck equation.} The limit curve $\dens_*$ from (ii) satisfies the non-linear Fokker-Planck equation with no-flux boundary condition \eqref{eq:FP} in the distributional sense, i.e., we have for each test function $\psi\in C^\infty_c([0,\infty)\times\Omega)$:
	\begin{align*}
		\int_0^\infty \int_\Omega - \Delta\psi \, \dens_*^m + \langle \nabla\psi,\nabla V \rangle \, \dens_*+ \langle \nabla\psi,\nabla W \ast \dens_* \rangle \, \dens_* \dd x \dd t 
		= \int_0^\infty \int_\Omega \partial_t \psi \, \dens_* \dd x\dd t+ \int_\Omega \psi(0) \, \dens^0 \dd x.
	\end{align*}
	\end{enumerate}
\end{thm}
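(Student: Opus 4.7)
The plan is to treat the five claims in order, since each relies on quantitative bounds established in the previous one.

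For claim (i), I would apply the direct method of the calculus of variations to the augmented functional in \eqref{eq:BDF2}. The negative Wasserstein term is the essential obstacle: I control it using $\W^2(\rkzm,\dens)\leq 2\W^2(\rkzm,\rkm)+2\W^2(\rkm,\dens)$, so that for $\tau<\tau_*$ (with $\tau_*$ chosen so that $\nrg$ and half of the positive Wasserstein term dominate) the functional is bounded below and coercive with respect to the second moment. Narrow lower semicontinuity of $\mu\mapsto \W^2(\sigma,\mu)$ and $\nrg$ then gives existence. The step-size independent \emph{classical bounds} are the crucial output: they should follow from an \textbf{almost-energy-dissipation} identity obtained by testing \eqref{eq:BDF2} with $\rkm$, which telescopes to give $\sum_k \W^2(\rkm,\rk)/\tau \leq C$ (kinetic energy bound), $\nrg(\rk)\leq \nrg(\dens_\tau^0)+C$, and thereby also uniform control of $\M(\rk)$ via the triangle inequality.

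For claim (ii), the kinetic energy bound yields that the piecewise-constant interpolant $\bar\dens_{\tau}$ (or the associated piecewise-geodesic one) is uniformly $1/2$-H\"older continuous between grid points in $\W$, modulo a $\sqrt{\tau}$-error. A refined Arzel\`a–Ascoli argument on $(\denses,\W)$, together with tightness from the second-moment bound, extracts a narrowly converging subsequence with limit in $\mathrm{AC}^2(0,\infty;(\denses,\W))$.

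Claim (iii) is the technical core, and the main obstacle. I would obtain it by a \emph{flow interchange} argument: let $\heat$ denote the heat semigroup, which is the $\W$-gradient flow of the Boltzmann entropy. Perturbing $\rk$ along $\heat$ and exploiting that $\rk$ is a minimizer of \eqref{eq:BDF2} yields, via the evolution variational inequality for the heat flow, a one-sided estimate of the form
\begin{align*}
\int_0^\tau |\nabla(\rk)^{m/2}|^2\,\mathrm{d}s \leq C\bigl(\nrg(\rkm)-\nrg(\rk)\bigr) + \tfrac{1}{\tau}\bigl(\W^2(\rkm,\rk)+\W^2(\rkzm,\rk)\bigr) + \text{lower-order terms}.
\end{align*}
Here the negative Wasserstein term in \eqref{eq:BDF2} must be handled with care: one has to exploit that along the heat flow the Wasserstein distance is controlled (using the $\W$-contraction properties of $\heat$). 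Summing in $k$ and invoking the telescoping of $\nrg$ and the kinetic bound yields an $L^2_t \dot H^1_x$ bound on $(\bar\dens_\tau)^{m/2}$, which together with the mass bound gives the $L^2_t BV_x$ estimate.

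Claims (iv) and (v) are then relatively standard. For (iv), strong convergence in $L^m((0,T)\times\tilde\Omega)$ follows from an extended Aubin–Lions lemma: the space compactness is provided by the $BV$ estimate of (iii), while the time equicontinuity is provided by the $\W$-H\"older bound of (ii), both acting on the sequence $(\bar\dens_{\tau_n})$. For (v), I would derive the discrete Euler–Lagrange equations from \eqref{eq:BDF2} by perturbing $\rk$ along the flow generated by a smooth vector field $\xi\in C^\infty_c(\Omega;\Rd)$, i.e., along a solution of the continuity equation. This produces a discrete identity involving the optimal transport maps from $\rk$ to $\rkm$ and from $\rk$ to $\rkzm$, weighted by $1/\tau$ and $-1/(2\tau)$, plus the first variation of $\nrg$. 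Using the kinetic bound and the fact that $\W(\rkm,\rkzm)\to 0$ in a suitable averaged sense, I expect the BDF2 weights to combine so that the velocity-like term converges, after integration in $t$ and against $\psi\in C^\infty_c$, to the correct $\partial_t\psi\,\dens_*$ contribution. The diffusive and drift terms are identified by strong convergence of $\bar\dens_{\tau_n}$ and strong convergence of $(\bar\dens_{\tau_n})^m$ granted by (iii)–(iv), closing the distributional formulation.
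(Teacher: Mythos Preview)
Your overall architecture matches the paper for (ii), (iv), and (v), but there are two places where your proposal diverges from the paper, one of which is a genuine gap and the other a genuinely different (and riskier) route.

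\textbf{On (i), lower semi-continuity.} You invoke ``narrow lower semicontinuity of $\mu\mapsto\W^2(\sigma,\mu)$ and $\nrg$'' to close the direct method. That is not enough: the functional in \eqref{eq:BDF2} contains the term $-\tfrac{1}{4\tau}\W^2(\rkzm,\dens)$ with the \emph{wrong} sign, so lower semicontinuity of $\W^2$ gives upper, not lower, semicontinuity of that piece. Your triangle-inequality bound gives coercivity but does not repair this. The paper addresses the issue separately (Lemma~\ref{lem:lsc}): it rewrites $4\W^2(\nu,\dens)-\W^2(\eta,\dens)$ via the gluing lemma as a single integral over a three-plan and then applies a one-sided Fatou-type result (uniform integrability of the negative part of the integrand, which is independent of the $\dens$-variable). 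Without this step your existence argument is incomplete.

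\textbf{On (iii), the $BV$-estimate.} Here you take a genuinely different route. The paper does \emph{not} use flow interchange; instead it \emph{first} derives the discrete Euler--Lagrange equations (what you postpone to (v)) by perturbing $\rk$ along the flow of a smooth vector field $\xi$, and then reads off the $BV$-bound directly from those equations by taking $\|\xi\|_\infty\le 1$: the term $\int(\rk)^m\operatorname{div}\xi$ is expressed through the optimal plans and bounded by $C(1+\W(\rk,\rkm)/\tau+\W(\rk,\rkzm)/\tau)$, which after squaring and summing is controlled by the kinetic energy bound. This is short and uses only what is already in hand. Your flow-interchange plan could in principle yield an $L^2_tH^1_x$ bound on $(\rk)^{m/2}$, but the step you flag as ``must be handled with care'' is a real obstacle: the EVI for the heat semigroup gives an \emph{upper} bound on $\tfrac{d}{ds}\W^2(\sigma,S_s\rk)$, which is the wrong direction for the negatively signed term $-\tfrac{1}{4\tau}\W^2(\rkzm,S_s\rk)$. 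The $\W$-contraction of the heat flow you cite applies when \emph{both} arguments evolve, not here. One can rescue this via the metric-derivative bound $|\tfrac{d}{ds}\W^2(\sigma,S_s\rk)|\le 2\W(\sigma,S_s\rk)\,|\dot S_s\rk|$ and a Young-inequality absorption, but that needs to be said explicitly; as written, the argument has a hole. The paper's approach sidesteps the issue entirely and is the cleaner choice.
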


The plan of the paper is as follows.
In section \ref{sec:Pre} we recall the basic notation of the theoretical framework of the gradient flow formulation of the non-linear Fokker-Planck equation, of our particular time-discretization and of $BV(\Omega)$-spaces.
 Section \ref{sec:disc} is concerned with basic properties of the augmented energy functional and of the approximation obtained by that scheme. 
 In Section \ref{sec:DiscEL} we derive the discrete Euler-Lagrange equations by means of a variation of the augmented energy functional along solutions to the continuity equation.
From these discrete Euler-Lagrange equations we derive $BV(\Omega)$-regularity estimates.
 In Section \ref{sec:Convergence} we complete the proof of the main theorem and prove the convergence of the approximation to the distributional solution of the non-linear Fokker-Planck equation \eqref{eq:FP}.


\section{Setup and Assumptions}\label{sec:Pre} 
\subsection{Gradient Flow Framework of the Non-linear Fokker-Planck equation}
\label{subsec:Wasserstein}
Throughout the rest of the paper $\Omega \subset \Rd$ is either equal to $\R^d$ or some open and bounded domain with Lipschitz-continuous boundary $\partial\Omega$.
By $\mathscr{P}(\Omega)$ we will denote the set of probability measures on $\Omega$. 
We say a sequence of measures $(\mu_n)_\ninN$ \emph{converges narrowly} to $\mu \in\mathscr{P}(\Omega)$ if and only if
\begin{align*}
	\lim_{n\to\infty} \int_\Omega \psi \dd \mu_n(x) = \int_\Omega \psi \dd \mu(x) \qquad \text{for all } \psi \in C_b(\Omega),
\end{align*}
i.e., narrow convergence is equal to $weak^*$-$convergence$, which is induced by the pairing of the continuous and bounded functions $C_b(\Omega)$  with the corresponding dual space of finite Borel measures $\mathcal{M}_f(\Omega)$.

A curve $\mu:[0,\infty)\to\denses$ is said to be \emph{$L^2$-absolutely continuous}, we write $\mu\in\mathrm{AC}^2(0,\infty;(\denses,\W))$, if there exists a function $A\in L^2_{loc}(0,\infty)$ such that 
\begin{align*}
	\W(\mu(s),\mu(t))\leq \int_s	^t A(r) \dd r \qquad \text{for all } 0\leq s\leq t. 
\end{align*}

The corresponding energy functional $\nrg$ of the non-linear Fokker-Planck equation \eqref{eq:FP}, defined in \eqref{eq:FPnrg}, is the sum of three parts: \emph{internal energy} $\tU_m$; \emph{external energy} $\tV$; \emph{interaction energy} $\tW$. 
The internal energy $\tU_m$ is given by
\begin{align*}
 \mathrm{for} \ m=1: \quad \heat(\mu) & := \tU_1 (\mu):= \int_\Omega \dens(x) \log (\dens(x)) \dd x, &	\mathrm{or \ for} \ m>1: \quad \tU_m (\mu) & :=  \frac{1}{m-1}\int_\Omega \dens(x)^m  \dd x,  
\end{align*}
where the measure $\mu$ is absolutely continuous with respect to the Lebesgue measure $\mathcal{L}^d$ with density $\rho$, i.e., $\mu= \dens \mathcal{L}^d$. 
For measures $\dens$ which are singular with respect to the Lebesgue measure, we set $\tU_m(\dens) = \infty$. 
This convention makes the internal energy $\tU_m$ lower semi-continuous with respect to narrow convergence, see \cite{ambrosio2000functions}. 
Therefore, by a slight abuse of notation, we shall always identify an absolutely continuous measure $\mu$ with its corresponding density $\dens$. The according \emph{proper domains} of the $\heat$ and $\tU_m$ are given by
	\begin{align*}	
	 \mathcal{K}_1 := &  \left\{ \dens \in \denses \mid \dens \log (\dens)  \in \Leins  \right\}, & 
		\Km  := & \left\{ \dens \in \denses \mid  \dens^m \in \Leins \right\} .
	\end{align*}
	Further, the external energy $\tV$ and the interaction energy $\tW$ are defined via
\begin{align*}
	\tV(\dens)&:=\int_\Omega V \dens \dd x ,  &\tW(\dens)&:= \frac12 \int_\Omega (W\ast\dens) \dens \dd x := \frac12 \int_{\Omega^2}W(x-y)  \dens(y) \dens(x) \dd x \dd y. 
\end{align*}
For the rest of the paper our assumption on the external potential $V$ and on the interaction kernel $W$ reads as follows:\\[-11mm]
\begin{quote}%
	\begin{assumption}\label{ass:VW} Let the external potential $V\in\mathcal{C}^{1} \left( \Omega \right)$ and the symmetric interaction kernel $W\in\mathcal{C}^{1} \left( \Rd \right)$ be bounded as follows:
		\begin{align*}
			\left| V(x) \right|, \, \left| W(x) \right|, \, \left\| \nabla V(x) \right\| , \, \left\| \nabla W(x) \right\|  & \leq  d_1 \left( 1 + \left\|x\right\|^2 \right).
			\end{align*}
	\end{assumption}	
\end{quote}
Note, this standard assumption guarantees that 	all integrals with respect to any measure $\dens\in\denses$ and with integrands $V$, $W$, $\nabla V$, or $\nabla W$ are well-defined and finite.
Further, the functionals $\tV$ and $\tW$ are continuous with respect to narrow convergence by this assumption \cite{ags}.

 \subsection{Discretization}\label{subsec:disc}
Similarly to \cite{matthes2017variational}, the \emph{BDF2 penalization} $\Psi:(0,\infty)\times(\denses)^3 \rightarrow \Ru$ of the original energy functional $\nrg$ is defined by
	\begin{align*}
					\Psi(\tau,\eta,\nu; \cdot):  \denses \rightarrow \Ru; \ \Psi(\tau,\eta, \nu ; \dens):= \frac1{ \tau}  \W^2(\nu,\dens) - \frac1{4 \tau} \W^2(\eta,\dens) +  \mathcal{F}(\dens). 
	\end{align*}
With this notation, given a time step size $\tau \in(0,\tau_*)$ and a pair of initial data $(\dens_\tau^{\minus 1}, \dens_\tau^0)$, the \emph{discrete solution} $(\rk)_{\kinN}$ for $\nrg$ on $(\denses,\W)$ defined in \eqref{eq:BDF2} is equivalently defined by the recursive formula 
\begin{align}
	\rk  \in \underset{\dens\in\denses}{\mathrm{argmin}} \ \Psi(\tau,\rkzm,\rkm;\dens) \quad \text{for} \ \kinN.	\label{eq:BDF2scheme}
\end{align}		
In the rest of the paper we approximate $\dens^0$ for a given time step size $\tau \in(0,\tau_*)$ by a pair of initial data $(\dens_\tau^{\minus 1}, \dens_\tau^0)$ as follows:\\[-11mm]%
	 \begin{quote}
 	\begin{assumption}\label{ass:I} 
	There are non-negative constants $d_3,d_4$ such that for all $\tau\in(0,\tau_*)$:
			\begin{enumerate}[({I}1)]
		\item $\W^2(\dens_\tau^{\minus 1},\dens_\tau^0)\leq d_3 \tau$ and $\W^2(\dens_\tau^0,\dens^0)\leq d_3 \tau$.
		\item $ \tU_m(\dens_\tau^{\minus 1}) \leq d_4$ and $\tU_m(\dens_\tau^0) \leq d_4.$
	\end{enumerate}
\end{assumption} 
 \end{quote}

\subsection{Functions of Bounded Variation}\label{subsec:bv}
We recall the basic definitions and properties of functions of bounded variation, following \cite{giusti1984minimal}.
A function $\dens\in L^1(\Omega)$ is called a \emph{function of bounded variation} if and only if
\begin{align*}
		V(\dens,\Omega):= \sup \left\{ \int_\Omega \dens(x) \,\operatorname{div} \xi(x)  \dd x \mid \xi \in C^\infty_c(\Omega,\Rd), \ \left\| \xi \right\|_\infty \leq 1  \right\} < \infty.
	\end{align*}
The set of all functions of bounded variation is denoted by $BV(\Omega)$ and can be equipped with the norm:
\begin{align*}
		\left\|\dens\right\|_{\BV} = \left\|\dens\right\|_\Leins + V(\dens,\Omega).
\end{align*}
For open sets $\Omega\subset\Rd$ the set $BV(\Omega)$ is a Banach space and the norm is lower semi-continuous with respect to the weak convergence in $L^1(\Omega)$.
In case that $\Omega$ is an open and bounded set in $\Rd$ with Lipschitz-continuous boundary $\partial\Omega$, sets of functions uniformly bounded in the $BV(\Omega)$-norm are relatively compact in $L^1(\Omega)$, see \cite[Theorem 1.19]{giusti1984minimal} for the statement and the proof.

\section{Well-posedness and Basic Properties of the BDF2 Scheme}\label{sec:disc}

\subsection{Lower Bounds and Lower Semi-Continuity}\label{subsec:Basic}		

We establish the following two basic properties of the BDF2 penalization $\Psi$, which will be essential for the solvability of problem \eqref{eq:BDF2scheme}: $\Psi(\tau,\eta,\nu; \, \cdot \,)$ is bounded from below and lower semi-continuous. 

\begin{lemma}[Lower Bound]\label{lem:lower}
		There exist a non-negative constant $d_2$ such that the BDF2 penalization $\Psi$ satisfies for each $\tau>0$ and for all $\dens,\eta,\nu \in \denses$:
	\begin{align}\label{eq:lowerbound}
		\Psi(\tau,\eta,\nu; \dens) 	\geq &   \left( \frac1{8\tau} - \frac32 d_1 -d_2 \right) \M(\dens)  - \frac1{ \tau} \M (\nu)  - \frac3{4 \tau} \M(\eta) - d_2 - \frac32 d_1.
	\end{align}
\end{lemma}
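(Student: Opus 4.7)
My plan is to treat the two Wasserstein contributions together, then the energy $\nrg$ separately, and finally add the two estimates. The main point is that the coefficient on the positive Wasserstein term ($1/\tau$) is four times that on the negative one ($1/(4\tau)$), so by choosing the split in Young-type inequalities asymmetrically we can keep a strictly positive multiple of $\M(\dens)$ on the right-hand side.

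\textbf{Step 1: lower bound for the Wasserstein terms.} Fix an origin $0\in\overline\Omega$ (or any reference point in the bounded case) so that $\W^2(\delta_0,\mu) = \M(\mu)$. Using the triangle inequality $\W(\nu,\dens)\geq \sqrt{\M(\dens)} - \sqrt{\M(\nu)}$ (when the right side is non-negative; otherwise the resulting estimate is trivially true) together with the elementary inequality $(a-b)^2\geq (1-\eps)a^2 + (1-\eps^{\minus 1})b^2$ applied with $\eps=\tfrac12$, I obtain
\begin{align*}
	\tfrac{1}{\tau}\W^2(\nu,\dens) \geq \tfrac{1}{2\tau}\M(\dens) - \tfrac{1}{\tau}\M(\nu).
\end{align*}
Symmetrically, from $\W(\eta,\dens)\leq \sqrt{\M(\dens)} + \sqrt{\M(\eta)}$ combined with $(a+b)^2\leq (1+\eps)a^2+(1+\eps^{\minus 1})b^2$ with $\eps=\tfrac12$, I obtain
\begin{align*}
	-\tfrac{1}{4\tau}\W^2(\eta,\dens) \geq -\tfrac{3}{8\tau}\M(\dens) - \tfrac{3}{4\tau}\M(\eta).
\end{align*}
Adding these two bounds yields exactly the kinetic-type contribution $\tfrac{1}{8\tau}\M(\dens) - \tfrac{1}{\tau}\M(\nu) - \tfrac{3}{4\tau}\M(\eta)$ that appears in \eqref{eq:lowerbound}.

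\textbf{Step 2: lower bound for $\nrg$.} For the internal part, when $m>1$ the integrand $\tfrac{1}{m-1}\dens^m$ is non-negative, while for $m=1$ a standard comparison of $\heat$ with a Gaussian reference measure of variance chosen in function of $d_1$ gives
\begin{align*}
	\tU_m(\dens) \geq -d_2\bigl(1 + \M(\dens)\bigr)
\end{align*}
for a suitable constant $d_2\geq 0$. For the potential and interaction parts, Assumption \ref{ass:VW} together with $\|x-y\|^2\leq 2\|x\|^2+2\|y\|^2$ (and symmetry of $W$) gives
\begin{align*}
	\tV(\dens) \geq -d_1\bigl(1+\M(\dens)\bigr), \qquad \tW(\dens) \geq -\tfrac{1}{2}d_1\bigl(1+2\M(\dens)\bigr),
\end{align*}
so that $\nrg(\dens)\geq -d_2 - \tfrac{3}{2}d_1 - \bigl(d_2+\tfrac{3}{2}d_1\bigr)\M(\dens)$ after redefining $d_2$ to absorb the extra multiples of $d_1$ coming from $\tV$ and $\tW$.

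\textbf{Step 3: combination.} Adding the two estimates reproduces \eqref{eq:lowerbound}. The only real content is Step~1; Step~2 is standard. I do not expect any genuine obstacle: the entire argument is an exercise in Young-type inequalities, the bookkeeping of constants being the only thing that requires care, and this is where the asymmetric choice $\eps=\tfrac12$ in both $(a\pm b)^2$ inequalities is essential to produce a strictly positive prefactor on $\M(\dens)$.
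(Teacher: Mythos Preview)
Your proof is correct and follows essentially the same strategy as the paper. The only cosmetic difference is in Step~1: you reach the moment bounds on the Wasserstein terms via the triangle inequality $|\W(\delta_0,\dens)-\W(\delta_0,\nu)|\leq \W(\dens,\nu)\leq \W(\delta_0,\dens)+\W(\delta_0,\nu)$ followed by Young's inequality, whereas the paper works pointwise with the elementary inequality $\tfrac12\|x\|^2-\|y\|^2\leq\|x-y\|^2\leq\tfrac32\|x\|^2+3\|y\|^2$ integrated against an optimal plan---both routes yield the identical estimates $\W^2(\nu,\dens)\geq\tfrac12\M(\dens)-\M(\nu)$ and $\W^2(\eta,\dens)\leq\tfrac32\M(\dens)+3\M(\eta)$. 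One small caveat: your use of $\delta_0$ tacitly assumes $0\in\Omega$; in the bounded-domain case the paper's pointwise inequality avoids this technicality, though of course your argument is easily repaired by either working on $\mathscr{P}_2(\overline\Omega)$ or switching to the pointwise route.
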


\begin{remark} 
Without loss of generality we assume that 
\begin{align}\label{eq:taustar}
	\tau_* < (12d_1+8d_2)^{-1},
\end{align}
such that $\dens \mapsto \Psi(\tau,\eta,\nu; \dens)$ is bounded from below by a constant.
\end{remark}

\begin{proof} Without loss of generality we can assume $\dens$ is an absolutely continuous measure with density $\dens$. Observe that $\heat$ is not bounded from below by a constant on $\denses$. However, we derive from the Carleman estimate a lower bound of $\heat$ in terms of the second moment $\M$, see \cite{jko}, i.e., there exist non-negative constants $d_2\geq0$ and $\gamma \in (\frac{d}{d+2} ,1 )$ such that 
	\begin{align}\label{eq:carleman}
		 \mathcal{U}_m (\dens)  \geq \mathcal{H}(\dens) \geq  - d_2 ( 1+ \M(\dens)  )^{\gamma}\geq -d_2(1+\M(\dens)). 
	\end{align}
Since the external potential $V$ and the interaction kernel $W$ grow at most quadratically at infinity, the corresponding energies can be estimated from below in terms of the second moment $\M$ by
	\begin{align*}
		\tV(\dens) + \tW(\dens) \geq - d_1\int_\Omega (1+\left\|x\right\|^2 )  \dd \dens(x) - \frac12 d_1 \int_{\Omega^2} (1+ \left\|x-y\right\|^2)\dd \dens(x)  \dd \dens(y) =  - \frac32 d_1(1 + \M(\dens)).
	\end{align*}
	From the elementary inequality $\left\|x\right\|^2-2\left\|y\right\|^2\leq2\left\|x-y\right\|^2\leq 3\left\|x\right\|^2 + 6\left\|y\right\|^2$ and from the definition of $\W$ it follows immediately
	\begin{align}\label{eq:bounddist}
		 \M( \dens) - 2\M (\nu) \leq 2\W^2 (\dens,\nu) \leq 3\M(\dens) + 6 \M(\nu) \qquad \text{for all } \dens,\nu\in\denses.
	\end{align}	
	Combining all three inequalities, we can deduce the following lower bound: 
	\begin{align*}
		\Psi(\tau,\eta,\nu; \dens)\geq & \frac1{2\tau} \M(\dens) - \frac1{ \tau} \M (\nu) - \frac3{8\tau} \M(\dens) - \frac3{4 \tau} \M(\eta) - d_2 ( 1+ \M(\dens)  ) - \frac32 d_1(1 + \M(\dens)),			\end{align*}
	which is equivalent to the desired inequality \eqref{eq:lowerbound}.	
\end{proof}

\begin{lemma}[Lower semi-continuity]\label{lem:lsc}
	For each $\tau>0$ and for all $\eta,\nu \in \denses$ the BDF2 penalization $\Psi(\tau,\eta,\nu;\, \cdot \,)$ is lower semi-continuous with respect to narrow convergence.
\end{lemma}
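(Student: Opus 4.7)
The penalization $\Psi(\tau,\eta,\nu;\cdot)$ is a sum of three pieces: the positive Wasserstein term $\tfrac1\tau\W^2(\nu,\cdot)$, the negative Wasserstein term $-\tfrac1{4\tau}\W^2(\eta,\cdot)$, and the energy $\nrg$. The plan is to handle the first and third by standard narrow-lsc results and to deal with the second via a coupling argument with the first. The positive Wasserstein term is narrowly lsc by the classical Fatou argument on optimal transport plans (see \cite{ags}); the energy $\nrg=\tU_m+\tV+\tW$ is narrowly lsc because $\tU_m$ is convex and Carleman-bounded \eqref{eq:carleman}, while $\tV$ and $\tW$ are narrowly continuous under Assumption \ref{ass:VW}. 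The entire difficulty is the minus sign in front of an lsc quantity.

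To exploit the interplay between the two Wasserstein terms I would restrict attention to sequences on which $\Psi$ stays bounded, which is sufficient for lsc. Given a narrowly convergent sequence $\rho_n\to\rho$ in $\denses$, pick a subsequence realising the liminf of $\Psi(\rho_n)$; if that liminf is $+\infty$ the claim is trivial, so assume it is finite. Lemma \ref{lem:lower} with $\tau<\tau_*$ then yields $\sup_n\M(\rho_n)<\infty$. After a further extraction $\M(\rho_n)\to M$; narrow lsc of $\M$ gives $M\geq\M(\rho)$, so $J:=M-\M(\rho)\geq 0$ is the ``second-moment gap'' along the subsequence.

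Next, I would rewrite each Wasserstein distance via the identity $\W^2(\mu,\sigma)=\M(\mu)+\M(\sigma)-2I(\mu,\sigma)$, with $I(\mu,\sigma):=\int\langle x,y\rangle\dd\bp^*(x,y)$ for the optimal plan. The Wasserstein part of $\Psi(\rho)$ then takes the form
\[ \frac{3}{4\tau}\M(\rho)-\frac{2}{\tau}I(\nu,\rho)+\frac{1}{2\tau}I(\eta,\rho)+\mathrm{const}(\tau,\eta,\nu). \]
The strictly positive coefficient $\tfrac3{4\tau}$ in front of $\M(\rho)$ is the key point: by Prokhorov the optimal plans $\bp_n^{(\mu)}\in\Gamma(\mu,\rho_n)$ for $\mu\in\{\nu,\eta\}$ admit narrowly convergent subsequences, and stability of optimal transport identifies the limits as optimal plans in $\Gamma(\mu,\rho)$. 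The $\M$-coefficient $\tfrac3{4\tau}$ will then absorb the contribution to $\Psi$ coming from any mass of $\rho_n$ escaping to infinity.

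The main obstacle is the quantitative ``equal escape cost'' estimate showing that escape to infinity contributes the same leading term $J$ to both $\W^2(\nu,\rho_n)$ and $\W^2(\eta,\rho_n)$. Concretely, for a continuity radius $R$ of $\rho$ I would decompose $\rho_n=\rho_n^R+\sigma_n^R$: the bulk $\rho_n^R$ converges in $\W_2$ to $\rho^R$ (narrow convergence on the compact set $\overline{B_R}$ plus matching second moments on $B_R$), and the escape $\sigma_n^R$ has total mass $\epsilon_n(R)\to\epsilon(R)\to 0$ but second moment tending to $J^R\to J$. Coupling all of $\sigma_n^R$ with a fixed basepoint $x_0$ in the support of $\mu$ yields a transport cost $\int|y-x_0|^2\dd\sigma_n^R=J^R+o(1)$, uniformly in bounded $x_0$, because the cross term is controlled by Cauchy--Schwarz,
\[ |x_0|\int|y|\dd\sigma_n^R\;\leq\;|x_0|\,\epsilon_n(R)^{1/2}\Bigl(\int|y|^2\dd\sigma_n^R\Bigr)^{1/2}\;\to\;0. \]
Combined with the bulk convergence this gives $\W^2(\mu,\rho_n)\to\W^2(\mu,\rho)+J$ with the \emph{same leading term} for both $\mu\in\{\nu,\eta\}$; substituting back and sending $R\to\infty$ yields $\liminf_n\Psi(\rho_n)\geq\Psi(\rho)+\tfrac{3J}{4\tau}\geq\Psi(\rho)$, which concludes the argument.
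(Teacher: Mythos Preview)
Your strategy is genuinely different from the paper's, and the heuristic is sound: any mass of $\rho_n$ escaping to infinity should contribute the \emph{same} second-moment defect $J$ to both $\W^2(\nu,\rho_n)$ and $\W^2(\eta,\rho_n)$, so the combination $\tfrac1\tau\W^2(\nu,\cdot)-\tfrac1{4\tau}\W^2(\eta,\cdot)$ picks up $\tfrac{3J}{4\tau}\ge0$. But as written there are two real gaps.

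First, the lemma is stated for \emph{every} $\tau>0$, while your reduction to bounded second moments via Lemma~\ref{lem:lower} requires the coefficient $\tfrac1{8\tau}-\tfrac32 d_1-d_2$ to be positive, i.e.\ $\tau<\tau_*$. For larger $\tau$ a sequence with $\Psi(\rho_n)$ bounded need not have $\sup_n\M(\rho_n)<\infty$, and your decomposition never gets started.

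Second, and more importantly, your ``equal escape cost'' step only furnishes the \emph{upper} bound $\limsup_n\W^2(\mu,\rho_n)\le\W^2(\mu,\rho)+J$: coupling $\sigma_n^R$ to (a neighbourhood of) a basepoint is a competitor construction. To conclude you also need the matching \emph{lower} bound $\liminf_n\W^2(\nu,\rho_n)\ge\W^2(\nu,\rho)+J$ for the positive term, and this is not established. Writing $\W^2(\nu,\rho_n)=\M(\rho_n)+\int(\|x\|^2-2\langle x,y\rangle)\dd\bp_n^*$, the missing lower bound amounts to lower semi-continuity of the second integral along the narrowly convergent optimal plans; but the negative part of $\|x\|^2-2\langle x,y\rangle$ is dominated only by $\|y\|^2$, whose marginal $\rho_n$ is precisely the one whose tails fail to be uniformly integrable when $J>0$. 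Turning your sketch into a proof would require showing that the portion of $\mu$ transported onto $\{\|y\|>R\}$ carries asymptotically vanishing $\|x\|^2$-moment, which is a nontrivial extra argument you have not supplied.

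The paper avoids both issues simultaneously by treating the two Wasserstein terms \emph{jointly}. Via the gluing lemma it writes
\[
4\W^2(\nu,\rho)-\W^2(\eta,\rho)=\int_{\Omega^3}\bigl(4\|x-y\|^2-\|x-z\|^2\bigr)\dd\bp(x,y,z)
\]
for a three-plan $\bp\in\Gamma(\rho,\nu,\eta)$ with optimal two-marginals, and then observes the elementary inequality $4\|x-y\|^2-\|x-z\|^2\ge-4(\|y\|^2+\|z\|^2)$. The crucial point is that this lower bound depends only on $(y,z)$, whose marginals $\nu,\eta\in\denses$ are \emph{fixed}; uniform integrability of the negative part is therefore automatic, and \cite[Lemma~5.1.7]{ags} gives the narrow lsc directly --- for every $\tau>0$, with no a priori second-moment bound on $\rho_n$ required. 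This is exactly the cancellation your approach is trying to capture, but packaged so that the troublesome $x$-variable drops out of the negative part before any limit is taken.
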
		
			
\begin{proof}			
Due to the lower semi-continuity with respect to narrow convergence of the internal energy $\tU_m$, the external potential $\tV$, and the interaction energy $\tW$, the energy $\nrg$ is also lower semi-continuous with respect to narrow convergence as sum of lower semi-continuous functions. 

	 Thus it remains to prove the lower semi-continuity of the auxiliary functional $\calA: \denses \to \R$, defined via
	 \begin{align*}
	 \calA(\dens):=4 \W^2(\nu,\dens)-\W^2(\eta,\dens).
	 \end{align*}
	 First, we simplify the auxiliary functional $\calA$. 
	 Let $\bp^1 \in\Gamma(\dens,\nu)$ and $\bp^2 \in\Gamma(\dens,\eta)$ be two optimal transport plans.
	 Further, introduce the special three-plan $\bp \in \Gamma(\dens,\nu,\eta):=\{ \bp \in \mathscr{P}(\Omega \times \Omega\times \Omega) :(\pi^1)_\# \bp = \dens, \  (\pi^2)_\# \bp = \nu, \  (\pi^3)_\# \bp = \eta\}$ such that $\bp$ has marginal with respect to the $x$- and $y$-components equals to $\bp^1$ and the marginal with respect to the $x$- and $z$-components is equal to $\bp^2$, i.e., $(\pi^{(1,2)})_\# \bp = \bp^1$ and $(\pi^{(1,3)})_\# \bp = \bp^2$.
	 The existence of such a three-plan is guaranteed by the gluing lemma, see \cite[Lemma 5.3.2]{ags}.
	Then we can rewrite the auxiliary functional $\calA$ as
	\begin{align}\label{eq:lsc2}
		\calA(\dens) =\int_{\Omega^2} 4\left\|x-y\right\|^2 \dd \bp^1(x,y) - \int_{\Omega^2} \left\|x-z\right\|^2 \dd \bp^2(x,z) = \int_{\Omega^3} 4\left\|x-y\right\|^2  -  \left\|x-z\right\|^2 \dd \bp(x,y,z).
	\end{align} 
	 
	 Now, let $(\dens_n)_\ninN$ be a narrowly converging sequence with limit $\dens_*\in\denses$. 
	 Since $(\dens_n)_\ninN$ is narrowly converging to $\dens_*$, the sequences $(\bp_n^1)_\ninN$ and $(\bp_n^2)_\ninN$ are relatively compact in $\mathscr{P}_2(\Omega^2)$ with respect to narrow convergence and any limit point is an optimal transport plan, see \cite[Proposition 7.1.3]{ags}. 
	 Thus we can extract a non-relabelled subsequence such that $(\bp_n^1)_\ninN$ and $(\bp_n^2)_\ninN$ converge narrowly to an optimal transport plan $\bp_*^1\in\Gamma(\dens_*,\nu)$ and to an optimal transport plan $\bp_*^2\in\Gamma(\dens_*,\eta)$, respectively. 
	 By the same argument, the sequence $(\bp_n)_\ninN$ of three-plans is relatively compact in $\mathscr{P}_2(\Omega^3)$ with respect to narrow convergence. 
	 Therefore we can extract a further non-relabelled subsequence such that $(\bp_n)_\ninN$ narrowly converges to some three-plan $\bp_*\in \Gamma(\dens_*,\nu,\eta)$.
	Taking marginals is continuous with respect to narrow convergence, so we have $(\pi^{(1,2)})_\#\bp_*=\bp_*^1$ and $(\pi^{(1,3)})_\#\bp_*=\bp_*^2$, i.e., this limit three-plan $\bp_*$ is admissible in \eqref{eq:lsc2}.
	
	Next, we want to apply the lower semi-continuity result \cite[Lemma 5.1.7]{ags} to the alternative representation of $\calA$.
	The uniform integrability of the negative part of the integrand in \eqref{eq:lsc2} with respect to $(\bp_n)_\ninN$ in the sense of \cite{ags} follows by the elementary inequality 
	 \begin{align*}
	 	4\left\|x-y\right\|^2 - \left\|x-z\right\|^2 \geq \frac12 \left\|x\right\|^2 -4\left\|y\right\|^2-3\left\|z\right\|^2 \geq -4\left(\left\|y\right\|^2 + \left\|z\right\|^2\right).
	 \end{align*}
	  Thus the lower bound on $4\left\|x-y\right\|^2 - \left\|x-z\right\|^2$ is independent of $x$.
	  Since the second moments of $\nu$ and $\eta$ are finite that difference is uniform integrable with respect to the family $(\bp_n)_\ninN$.	 
	 Hence, we can invoke \cite[Lemma 5.1.7]{ags} to conclude 
	\begin{align*}
	 \int_{\Omega^3} 4 \left\|x-y\right\|^2 - \left\|x-z\right\|^2 \dd \bp_*(x,y,z) \leq \liminf_\ntoinf \int_{\Omega^3} 4\left\|x-y\right\|^2 - \left\|x-z\right\|^2 \dd \bp_n(x,y,z).
	 \end{align*}
 Therefore the auxiliary function $\dens \mapsto \calA(\dens)=4 \W^2(\nu,\dens)-\W^2(\eta,\dens)$ is lower semi-continuous with respect to narrow convergence.
	 \end{proof}
			
\subsection{Existence of Minimizer}\label{subsec:Existence}

Recall that the well-posedness of a single step of the BDF2 scheme is equivalent to the existence of a minimizer in \eqref{eq:BDF2scheme}. 
The augmented energy functional $\Psi$ shares no uniform semi-convexity as in the case of \cite{matthes2017variational}, so we cannot exploit the convexity to ensure the existence of a minimizer. 
Nevertheless, a standard technique from the calculus of variations yields the existence of a minimizer. 

\begin{thm}[Existence of a minimizer] \label{ExistenceBDF2}
For each $\tau \in \left(0 , \tau_* \right)$ and for all $ \eta,\nu \in \denses$, there exists an absolutely continuous minimizer $\dens_* \in \Km$ of the map $\dens \mapsto \Psi(\tau,\eta,\nu; \dens)$.
\end{thm}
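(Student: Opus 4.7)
The plan is the classical direct method of the calculus of variations: produce a minimizing sequence, extract a narrowly convergent subsequence via coercivity of the second moment, and conclude via the lower semicontinuity established in Lemma \ref{lem:lsc}.

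First I would verify that the infimum $I := \inf_{\dens \in \denses} \Psi(\tau,\eta,\nu;\dens)$ is finite. Lemma \ref{lem:lower} combined with the choice \eqref{eq:taustar} of $\tau_*$ guarantees that the coefficient of $\M(\dens)$ on the right-hand side of \eqref{eq:lowerbound} is strictly positive; hence $I > -\infty$. To see $I < +\infty$, one evaluates $\Psi(\tau,\eta,\nu;\cdot)$ at a convenient reference measure $\tilde\dens \in \Km$ with finite second moment (for instance, a smooth compactly supported bump density when $\Omega$ is bounded, or a Gaussian when $\Omega = \Rd$). Assumption \ref{ass:VW} together with $\tilde\dens \in \Km$ ensures $\nrg(\tilde\dens) < \infty$, and the Wasserstein terms are trivially finite.

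Next I take a minimizing sequence $(\dens_n)_\ninN \subset \denses$ with $\Psi(\tau,\eta,\nu;\dens_n) \to I$. The lower bound \eqref{eq:lowerbound}, rearranged, reads
\begin{align*}
	\left( \frac1{8\tau} - \frac32 d_1 - d_2 \right) \M(\dens_n) \leq \Psi(\tau,\eta,\nu;\dens_n) + \frac1{\tau} \M(\nu) + \frac3{4\tau} \M(\eta) + d_2 + \frac32 d_1,
\end{align*}
so by the choice of $\tau_*$ the sequence $(\M(\dens_n))_\ninN$ is uniformly bounded. A uniform bound on second moments yields tightness of $(\dens_n)_\ninN$ in $\mathscr{P}(\Omega)$, hence by Prokhorov's theorem there exists a (non-relabelled) subsequence and a limit $\dens_* \in \mathscr{P}(\Omega)$ such that $\dens_n \rightharpoonup \dens_*$ narrowly. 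Lower semicontinuity of $\M$ under narrow convergence gives $\M(\dens_*) < \infty$, so $\dens_* \in \denses$.

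Finally, applying Lemma \ref{lem:lsc} to the narrowly converging subsequence yields
\begin{align*}
	\Psi(\tau,\eta,\nu;\dens_*) \leq \liminf_\ntoinf \Psi(\tau,\eta,\nu;\dens_n) = I,
\end{align*}
so $\dens_*$ realizes the infimum. Since $\Psi(\tau,\eta,\nu;\dens_*) < \infty$ and the Wasserstein terms are finite, we must have $\nrg(\dens_*) < \infty$, which by definition of $\nrg$ in \eqref{eq:FPnrg} forces $\dens_*$ to be absolutely continuous with respect to $\mathcal{L}^d$ and $\dens_* \in \Km$. The only delicate point in this strategy is the narrow lower semicontinuity of the indefinite-sign functional $\calA(\dens) = 4\W^2(\nu,\dens) - \W^2(\eta,\dens)$, but this is precisely the content of Lemma \ref{lem:lsc}, so the argument is concluded.
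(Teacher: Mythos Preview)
Your argument is correct and follows the same direct-method blueprint as the paper: minimizing sequence, coercivity via \eqref{eq:lowerbound}, extraction of a narrowly convergent subsequence, and conclusion via Lemma~\ref{lem:lsc}. The one genuine difference is in how compactness is obtained. You go straight from the uniform second-moment bound to Prokhorov's theorem and narrow compactness, and then recover absolute continuity and $\dens_*\in\Km$ \emph{a posteriori} from $\nrg(\dens_*)<\infty$. The paper instead first bounds $\tU_m(\dens_n)$ along the minimizing sequence, invokes Dunford--Pettis (using the superlinear growth of $r\mapsto r\log r$ or $r\mapsto r^m$) to obtain weak $L^1(\Omega)$-compactness, and then deduces narrow convergence; this route delivers absolute continuity of the limit directly as a weak $L^1$ limit, with $\dens_*\in\Km$ following from lower semicontinuity of the $L^m$-norm. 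Both arguments are short and sound; yours is slightly more economical in that it does not require the separate energy bound on $\tU_m(\dens_n)$, while the paper's version makes the $L^1$ structure of the limit more explicit from the outset.
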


\begin{proof}
	Take a minimizing sequence $(\dens_n)_\ninN$ for the BDF2 penalization $\dens \mapsto \Psi(\tau,\eta,\nu; \dens)$.
	To extract a convergent subsequence, we use the auxiliary inequality \eqref{eq:lowerbound}. 
	Since $ \tau < \tau_*$, the pre-factor of the second moment $\M(\dens)$ in \eqref{eq:lowerbound} is positive. 
	Hence, the second moment $(\M(\dens_n))_\ninN$ of the minimizing sequence $(\dens_n)_\ninN$ is bounded. 
	Also the internal energy $\tU_m(\dens_n)$ of the minimizing sequence is bounded, since
	\begin{align*}
		\tU_m(\dens_n) \leq &  \Psi(\tau,\eta,\nu;\dens_n) + \frac{1}{4\tau}\W^2(\eta,\dens_n) -\tV(\dens_n) - \tW(\dens_n) 
		 \leq  \sup_\ninN \left[\Psi(\tau,\eta,\nu;\dens_n) + C(1+\M(\dens_n)) \right] < \infty.
	\end{align*}
	Due to the super-linear growth of $\dens\mapsto\dens\log(\dens)$ and of $\dens\mapsto\dens^m$, we can apply the Dunford-Pettis Theorem to the densities $(\dens_n)_\ninN$ and we can extract a non-relabelled subsequence $(\dens_n)_\ninN$ converging weakly in $L^1(\Omega)$. 
	Since $C^b(\Omega)\subset L^\infty(\Omega)=(L^1(\Omega))^*$, in this case  we can deduce from the weak convergence in $L^1(\Omega)$ of the sequence of densities the narrow convergence of the corresponding measures.
	 Summarized, the sequence $(\dens_n)_\ninN$ also converges narrowly to an absolutely continuous measure $\dens_*\in\denses$ with density $\dens_*$.
	 By the lower semi-continuity of the $L^m(\Omega)$-norm with respect to narrow convergence it follows $\dens_*\in\Km$.
	 
	 To prove that $\dens_*$ is indeed a minimizer we use the lower semi-continuity of the BDF2 penalization $\Psi$, proven in Lemma \ref{lem:lsc}, to conclude
	 \begin{align*}
	 	\Psi(\tau,\eta,\nu; \dens_*)\leq \liminf_{\ntoinf} \Psi(\tau,\eta,\nu; \dens_n) = \inf_{\dens\in\denses} \Psi(\tau,\eta,\nu; \dens).
	 \end{align*}
		Indeed, the limit measure with density $\dens_*$ is a minimizer of the BDF2 penalization $\Psi(\tau,\eta,\nu;\,\cdot\,)$.
	\end{proof}

\subsection{Step size independent estimates}\label{subsec:ClassEst}
By the previous theorem, the sequence $(\rk)_{\kinN}$ given by the BDF2 method is well-defined for $\tau\in(0,\tau_*)$. 
Next, we deduce three step size independent bounds: \emph{on the kinetic energy, on the internal energy, and  on the second moment}. 
We want to emphasize that these estimates are intrinsic properties of the scheme, which do not rely on any uniform semi-convexity of the augmented energy functional $\Psi$. 
The original proof of those estimates can be found in \cite{matthes2017variational} and for the sake of the completeness we recall a proof in Appendix \ref{app:Bounds} adapted to the $L^2$-Wasserstein formalism.

\begin{thm}[Classical estimates] \label{thm:bounds}
Fix a time horizon $T>0$.
There exists a constant $C$, depending only on $d_1$ to $d_4$ and $T$, 
  such that the corresponding discrete solutions $(\dens_\tau^k)_{k\in\N}$ satisfy
  \begin{align}
    \sum_{k=0}^{N} \frac{1}{2\tau} \W^2(\rkm,\rk) & \leq  C ,  &   |\tU_m(\dens_\tau^{N}) | & \leq C,  & \M(\rN) &\leq C \label{eq:classicalbounds},
  \end{align}
  for all $\tau\in(0,\tau_*)$ and for all $N\in\N$ with $N\tau\le T$. 
 \end{thm}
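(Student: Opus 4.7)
Following the strategy recalled in Appendix~\ref{app:Bounds} from \cite{matthes2017variational}, adapted here to the $L^2$-Wasserstein formalism, I will derive an \emph{almost energy dissipation} inequality for the modified discrete energy $\mathcal{E}^k := \nrg(\rk) + \frac{1}{4\tau}\W^2(\rkm,\rk)$. Testing the minimality of $\rk$ in \eqref{eq:BDF2scheme} against the competitor $\dens = \rkm$ produces
\begin{align*}
\frac{1}{\tau}\W^2(\rkm,\rk) + \nrg(\rk) + \frac{1}{4\tau}\W^2(\rkzm,\rkm) \leq \frac{1}{4\tau}\W^2(\rkzm,\rk) + \nrg(\rkm).
\end{align*}
The cross term $\W^2(\rkzm,\rk)$ is then absorbed by the triangle-and-Young estimate $\W^2(\rkzm,\rk) \leq 2\W^2(\rkzm,\rkm) + 2\W^2(\rkm,\rk)$, yielding the stepwise dissipation $\mathcal{E}^k + \frac{1}{4\tau}\W^2(\rkm,\rk) \leq \mathcal{E}^{k-1}$, which telescopes to
\begin{align*}
\mathcal{E}^N + \sum_{k=1}^N \frac{1}{4\tau}\W^2(\rkm,\rk) \leq \mathcal{E}^0.
\end{align*}

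I then check that $\mathcal{E}^0 = \nrg(\dens_\tau^0) + \frac{1}{4\tau}\W^2(\dens_\tau^{\minus 1},\dens_\tau^0)$ is bounded uniformly in $\tau\in(0,\tau_*)$: the distance term is controlled by $d_3/4$ via Assumption~\ref{ass:I}(I1), $\tU_m(\dens_\tau^0)$ by $d_4$ via I2, and the contributions of $\tV,\tW$ by Assumption~\ref{ass:VW} applied to $\M(\dens_\tau^0)$, which is itself uniformly bounded thanks to I1 and $\dens^0\in\denses$. To extract the three bounds in \eqref{eq:classicalbounds} from the telescoped estimate, I still need to control $\mathcal{E}^N$ from below, which reduces to controlling $\M(\rN)$. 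For this I use that $\sqrt{\M(\cdot)}=\W(\cdot,\delta_0)$ is $1$-Lipschitz with respect to $\W$, so telescoping together with Cauchy-Schwarz and $N\tau\leq T$ gives
\begin{align*}
\M(\rN) \leq 2\M(\dens_\tau^0) + 8T\,D^N, \qquad D^N := \sum_{k=1}^N \frac{1}{4\tau}\W^2(\rkm,\rk).
\end{align*}
Combined with the Carleman bound $\tU_m(\rN)\geq -d_2(1+\M(\rN))^\gamma$ from \eqref{eq:carleman} and the linear-in-$\M$ lower bound on $\tV+\tW$ from Assumption~\ref{ass:VW}, this closes a coupled scalar inequality for $D^N$ and $\M(\rN)$.

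The main obstacle is the linear-in-$\M$ growth of the potentials $V,W$, which precludes a direct absorption on arbitrarily long horizons $T$. I expect to overcome this by combining two tools: (a) the sublinearity $\gamma<1$ of the Carleman estimate, which via Young's inequality absorbs $(D^N)^\gamma$-type contributions with an arbitrarily small prefactor; and (b) a discrete Gronwall / iteration argument on subintervals of length depending only on $d_1,d_2$, which propagates the absorbed bound up to the fixed horizon $T$ at the cost of a $T$-dependent constant. Once $\M(\rN)$ and $D^N$ are uniformly bounded, the two-sided bound on $|\tU_m(\rN)|$ follows from the upper estimate $\tU_m(\rN) \leq \mathcal{E}^0 - \tV(\rN) - \tW(\rN) \leq C(1+\M(\rN))$ together with the Carleman lower bound, completing all three inequalities in \eqref{eq:classicalbounds}.
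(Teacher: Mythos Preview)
Your derivation of the almost-energy-dissipation inequality and its telescoping is identical to the paper's Lemma~\ref{lem:EnergyDim} and equation~\eqref{eq:bound1}. The bookkeeping for $\mathcal{E}^0$ is also the same. The difference lies in how you close the moment bound. You bound $\M(\rN)$ globally via $\sqrt{\M(\rN)}\le\sqrt{\M(\dens_\tau^0)}+\sum_k\W(\rkm,\rk)$ and Cauchy--Schwarz over the whole horizon, which produces the coupling $\M(\rN)\le C+8T\,D^N$; after inserting the lower bound on $\nrg(\rN)$ this leaves a term $12d_1T\,D^N$ on the right that cannot be absorbed for large $T$, forcing you into an iteration over subintervals. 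This can be made to work (the restart data satisfy analogues of Assumption~\ref{ass:I} thanks to the bounds obtained on the previous subinterval), but it is more laborious.

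The paper avoids this detour: instead of a global Cauchy--Schwarz, it applies a weighted Young inequality \emph{per step},
\[
2\W(\rkm,\rk)\,\M(\rk)\ \le\ \frac{\tau_*}{4}\,\frac{\W^2(\rkm,\rk)}{\tau}+\frac{4\tau}{\tau_*}\,\M^2(\rk),
\]
so that after summing and inserting \eqref{eq:bound1} the contribution of $-\nrg(\rN)$ enters with prefactor $\tau_*$ rather than $T$. The choice \eqref{eq:taustar} of $\tau_*$ then makes $\tau_*(d_2+\tfrac32 d_1)<\tfrac12$, and the $\M^2(\rN)$-term is absorbed directly. What remains is a clean discrete Gronwall inequality $\M^2(\rN)\le 2C+\frac{8\tau}{\tau_*}\sum_{k\le N}\M^2(\rk)$, valid for all $N\tau\le T$, which iterates to a single exponential bound. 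In short: your approach trades one Gronwall step for a subinterval iteration; the paper's choice of Young parameter $\tau_*$ (instead of a $T$-dependent one) is what makes the direct Gronwall possible.
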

 
 \begin{proof}
 	The proof of this theorem is given in Appendix \ref{app:Bounds}. 
 \end{proof}

\subsection{Narrow Convergence}

We are able to prove our first weak convergence results.
The step size independent bounds \eqref{eq:classicalbounds} and the Arzel\`a-Ascoli theorem, which can be found in \cite[Proposition 3.3.1]{ags}, guarantee the narrow convergence of the interpolated solution $\bar\dens_\tau$.
 
 \begin{thm}[Narrow convergence in $\denses$]\label{thm:convn} 
	Given a vanishing sequence $(\tau_n)_\ninN$ of step sizes $\tau_n\in(0,\tau_*)$.
	Then, there exists a (non-relabelled) subsequence $(\tau_n)_\ninN$ and a $L^2$-absolutely continuous limit curve $\dens_* \in \mathrm{AC}^2(0,\infty; (\denses,\W))$ such that for any $t\geq0$:
		\begin{align*}
			\bar\dens_{\tau_n}(t) \rightharpoonup \dens_*(t) \qquad  &\text{narrowly in the space }\denses \text{ as } \ntoinf.
		\end{align*}
\end{thm}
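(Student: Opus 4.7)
The plan is to verify the two hypotheses of the refined Arzelà-Ascoli theorem \cite[Proposition 3.3.1]{ags} applied to the family of piecewise constant interpolants $\bar\dens_\tau : [0,\infty) \to (\denses,\W)$: pointwise relative compactness and an asymptotic equicontinuity estimate. Both follow directly from the classical estimates \eqref{eq:classicalbounds}.

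First I would fix a time horizon $T>0$ and observe that the second moment bound $\M(\bar\dens_\tau(t)) \leq C_T$ for all $t \in [0,T]$ and all $\tau \in (0,\tau_*)$ implies tightness of the family $\{\bar\dens_\tau(t) : \tau\in(0,\tau_*),\, t\in[0,T]\}$ via Chebyshev, hence its relative compactness with respect to narrow convergence in $\denses$ by Prokhorov's theorem. This handles the pointwise compactness condition.

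Second, I would establish the asymptotic $\tfrac12$-Hölder equicontinuity. For $0 \leq s < t \leq T$, writing $s \in ((k-1)\tau,k\tau]$ and $t \in ((k'-1)\tau,k'\tau]$ with $k \leq k'$, the triangle inequality together with the Cauchy-Schwarz inequality yields
\begin{align*}
\W(\bar\dens_\tau(s),\bar\dens_\tau(t)) \leq \sum_{j=k+1}^{k'} \W(\dens_\tau^{j-1},\dens_\tau^j) \leq \sqrt{k'-k}\,\Biggl(\sum_{j=k+1}^{k'} \W^2(\dens_\tau^{j-1},\dens_\tau^j)\Biggr)^{1/2}.
\end{align*}
Invoking the kinetic energy bound in \eqref{eq:classicalbounds} and using that $(k'-k)\tau \leq (t-s)+\tau$, I obtain
\begin{align*}
\W(\bar\dens_\tau(s),\bar\dens_\tau(t)) \leq \sqrt{2C\bigl((t-s)+\tau\bigr)}.
\end{align*}
This is precisely the form of asymptotic equicontinuity required by \cite[Proposition 3.3.1]{ags}: along any vanishing sequence $(\tau_n)_\ninN$, the right-hand side converges uniformly on $[0,T]^2$ to $\sqrt{2C(t-s)}$.

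Third, an application of \cite[Proposition 3.3.1]{ags} on $[0,T]$ yields a subsequence (not relabelled) and a limit $\dens_* \colon [0,T]\to\denses$ such that $\bar\dens_{\tau_n}(t)\rightharpoonup\dens_*(t)$ narrowly for every $t\in[0,T]$, and passing to the limit in the displayed inequality gives $\W(\dens_*(s),\dens_*(t)) \leq \sqrt{2C(t-s)}$; in particular $\dens_*\in\mathrm{AC}^2(0,T;(\denses,\W))$ with metric derivative bounded by $\sqrt{2C}$. Finally I would extend to $[0,\infty)$ by a standard diagonal procedure: apply the above on an exhausting sequence $T_\ell=\ell \nearrow \infty$ and extract successive subsequences that agree on each $[0,T_\ell]$, then diagonalize to obtain a single subsequence for which convergence holds at every $t\geq 0$.

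The only delicate point is the piecewise constant (hence discontinuous) nature of $\bar\dens_\tau$, which is precisely why the classical Arzelà-Ascoli theorem does not apply; the refined version in \cite{ags} is tailored to absorb the $\sqrt{\tau}$ defect in the equicontinuity estimate, and once this is in place the argument is essentially automatic.
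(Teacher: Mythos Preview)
Your proof is correct and follows the same overall strategy as the paper: pointwise compactness from the second-moment bound, asymptotic equicontinuity from the kinetic-energy bound, then \cite[Proposition 3.3.1]{ags} and a diagonal argument in $T$.

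The only real difference is in how the equicontinuity is extracted. You apply Cauchy--Schwarz directly to the telescoping sum, obtaining the uniform estimate $\W(\bar\dens_\tau(s),\bar\dens_\tau(t))\le\sqrt{2C((t-s)+\tau)}$. The paper instead introduces the discrete metric derivative
\[
A_n(t):=\frac{\W(\dens_{\tau_n}^{k-1},\dens_{\tau_n}^k)}{\tau_n}\quad\text{for }t\in((k-1)\tau_n,k\tau_n],
\]
shows that $(A_n)$ is bounded in $L^2(0,T)$ by the kinetic-energy estimate, extracts a weak $L^2$-limit $A$, and arrives at
\[
\limsup_{n\to\infty}\W(\bar\dens_{\tau_n}(s),\bar\dens_{\tau_n}(t))\le\int_s^t A(r)\dd r.
\]
Your argument is shorter and more elementary; the paper's yields a sharper (non-constant) modulus of continuity for $\dens_*$ and, more importantly, the auxiliary objects $A_n$ together with the intermediate bound $\W(\bar\dens_{\tau_n}(s),\bar\dens_{\tau_n}(t))\le\int_{(s-\tau_n)^+}^t A_n(r)\dd r$ are reused verbatim later in the proof of the strong $L^m$-convergence (Theorem~\ref{thm:convs}) to verify the relaxed weak integral equicontinuity condition of the Aubin--Lions lemma. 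So if you adopt your version here, you will need to redo that small computation when you reach the strong-convergence step.
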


\begin{proof}
Fix $T>0$ and define the auxiliary function $A_n\in L^2(0,T)$, also called discrete derivative, as
\begin{align*}
	A_n(t):= \frac{\W(\dens^{k\minus 1}_{\tau_n},\dens^k_{\tau_n})}{{\tau_n}} \quad \text{for} \quad t\in((k-1){\tau_n},k{\tau_n}] \quad \text{and} \quad \kinN.
\end{align*}
Using the step size independent bounds \eqref{eq:classicalbounds} we obtain for $N_T=\max\{N\mid N{\tau_n}\leq T\}$:
\begin{align*}
	\int_0^{T} A_n^2(t) \dd t \leq \sum_{k=1}^{N_T} \int_{(k-1){\tau_n}}^{k{\tau_n}} \left( \frac{\W(\dens^{k\minus1}_{\tau_n},\dens^k_{\tau_n})}{{\tau_n}} \right)^2 \dd t= \sum_{k=1}^{N_T} \frac{\W^2(\dens^{k\minus1}_{\tau_n},\dens^k_{\tau_n})}{{\tau_n}} \leq C.
\end{align*}
Indeed, $A_n\in L^2(0,T)$ and the $L^2(0,T)$-norm of $A_n$ is uniformly bounded independently of the step size ${\tau_n}$. 
Therefore, the sequence $A_n$ possesses a non-relabelled subsequence weakly convergent in $L^2(0,T)$ with limit $A\in L^2(0,T)$. 
To derive an uniform H\"older-estimate for $\bar\dens_{\tau_n}$, choose $0\leq s\leq t \leq T$ arbitrary and define $k_t=\max\{k \in \N \mid k{\tau_n} \leq t\}$, then 
\begin{align}
	\W(\bar\dens_{\tau_n}(s),\bar\dens_{\tau_n}(t))\leq \sum_{k=k_s+1}^{k_t} \W(\dens^{k\minus1}_{\tau_n},\dens^k_{\tau_n}) = \sum_{k=k_s+1}^{k_t}  \int_{(k-1){\tau_n}}^{k{\tau_n}} \frac{\W(\dens^{k\minus 1}_{\tau_n},\dens^k_{\tau_n})}{{\tau_n}}  \dd t \leq \int_{(s- {\tau_n})^+}^{t} A_n(t) \dd t. \label{eq:hcont}
\end{align}
Taking the limit $\ntoinf$ yields, together with the weak convergence in $L^2(0,T)$ of $A_n$ to $A$,
\begin{align*}
	\limsup_{\ntoinf} \W(\bar\dens_{\tau_n}(s),\bar\dens_{\tau_n}(t)) \leq \int_s^t A(r) \dd r.
\end{align*}
Moreover, the second moments of the discrete solutions $(\dens_{\tau_n}^k)_\kinN$ are uniformly bounded independently of the step size $\tau_n$ and therefore the interpolated solutions $\bar\dens_{\tau_n}$ is uniformly contained in a set $K$ which is compact with respect to narrow convergence.
Hence, we can apply the Arzel\`a-Ascoli Theorem \cite[Proposition 3.3.1]{ags} yielding the existence of a non-relabelled subsequence and a limit curve $\dens_*:[0,T]\to\denses$ such that $\bar\dens_{\tau_n}(t)$ converges narrowly to $\dens_*(t)$ for each fixed $t\in[0,T]$. 
Additionally, the limit curve $\dens_*$ is $L^2$-absolutely continuous with modulus of continuity $A \in L^2(0,T)$. A further diagonal argument in $T\to\infty$ yields the narrow convergence on for any $t\geq 0$ and $\dens_*\in\mathrm{AC}^2(0,\infty;(\denses,\W))$.
\end{proof}

\section{Discrete Euler Lagrange Equation and Improved Regularity}\label{sec:DiscEL}
In theorem \ref{DiscInequaDens}, we derive the discrete Euler-Lagrange equations for the weak formulation of the non-linear Fokker-Planck equation \eqref{eq:FP}.
The key idea is the JKO-method introduced in \cite{jko}, i.e., we determine the first variation of the augmented energy functional $\Psi(\tau,\rkzm,\rkm;\, \cdot\, )$ in the space $(\denses,\W)$ along solutions to the continuity equation
\begin{align}\label{eq:cont}
	\partial_s \dens_s + \operatorname{div}(  \xi \, \dens_s ) = 0, \qquad \dens_0=\rk,
\end{align}
for an arbitrary smooth vector field $\xi \in C^\infty_c(\Omega,\Rd)$.
The solution $\dens_s$ is explicitly given by the push-forward of $\rk$ under the flow $\Phi_s$, i.e., $\rho_s = (\Phi_s)_\# \rk$, such that the flow $\Phi_s$ satisfies the initial value problem:
\begin{equation*}
 \frac{\mathrm{d}}{\mathrm{d}s}  \Phi_s (x) = \xi \left( \Phi_s (x) \right), \qquad \Phi_0(x) = x.
\end{equation*}
Note that the flow $\Phi_s$ exists and each $\Phi_s$ is a diffeomorphism on $\Omega$. Additionally, we can calculate the derivative of $\det(\operatorname{D} \Phi_s)$ and we have an explicit representations of the perturbed density $\dens_s$, i.e.,
\begin{align}\label{eq:repcont}
	\frac{\dd }{\dd s} \left[\det(\operatorname{D}\Phi_s(x))\right]_{s=0} &= \operatorname{tr}( \operatorname{D} \xi \circ \Phi_0)= \operatorname{div} (\xi), \qquad \text{and} \qquad  \det(\operatorname{D}\Phi_s) \dens_s \circ \Phi_s = \rk.
\end{align}

\begin{thm}[Discrete Euler-Lagrange equations] \label{DiscInequaDens}
The discrete solution $\left(\rk\right)_\kinN$ obtained by the BDF2 method satisfies for each $k\in\N$ and for all vector fields $\xi \in C^\infty_c(\Omega,\Rd) $
\begin{align}
\begin{split}
 0= & \int_{\Omega}  - \operatorname{div} (\xi) \, (\rk)^m +  \langle \xi , \nabla V\rangle \, \rk + \langle \xi , \nabla W \ast \rk \rangle \, \rk \dd x \\
   & + \frac{2}{\tau} \int_{\Omega^2} \langle \xi(x), x-y \rangle  \dd \bp_\tau^k(x,y) -\frac{1}{2\tau} \int_{\Omega^2} \langle \xi(x),x-z \rangle \dd \bq_\tau^k(x,z)   ,  \label{eq:DiscreteEL}
\end{split}	
\end{align}
where $\bp_\tau^k\in\Gamma(\rk,\rkm)$ and $\bm{q}_\tau^k\in\Gamma(\rk,\rkzm)$ are optimal transport plans.
\end{thm}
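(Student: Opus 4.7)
The plan is to follow the spirit of the JKO method and exploit the minimality of $\rk$ for the BDF2 penalization $\Psi(\tau,\rkzm,\rkm;\,\cdot\,)$. For an arbitrary $\xi \in C^\infty_c(\Omega,\Rd)$ I would perturb $\rk$ along the curve $\dens_s = (\Phi_s)_\# \rk$ generated by the flow $\Phi_s$ of $\xi$ and test minimality against $\dens_s$. Since $\rk$ is a minimizer, $\Psi(\tau,\rkzm,\rkm;\dens_s) \geq \Psi(\tau,\rkzm,\rkm;\rk)$ for all sufficiently small $|s|$; dividing by $s>0$ and passing to the limit produces one inequality on the first variation, and repeating the argument with $-\xi$ in place of $\xi$ gives the reverse inequality, yielding the claimed equality \eqref{eq:DiscreteEL}.

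The energy contribution $\nrg(\dens_s)-\nrg(\rk)$ will be handled by the standard push-forward computations built on \eqref{eq:repcont}. Differentiating $\det(\operatorname{D}\Phi_s)\,\dens_s\circ\Phi_s = \rk$ at $s=0$ gives $\heat$ (for $m=1$) and $\tU_m$ (for $m>1$) first variation $-\int_\Omega (\rk)^m\operatorname{div}(\xi)\dd x$; the external potential produces $\int_\Omega \langle\nabla V,\xi\rangle\rk\dd x$; and the symmetry of $W$, combined with swapping the two integration variables of the double integral, yields $\int_\Omega \langle\nabla W\ast\rk,\xi\rangle\rk\dd x$ for $\tW$. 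For the Wasserstein pieces, pushing the optimal plans forward in the first coordinate supplies the admissible competitors $(\Phi_s\otimes\mathrm{id})_\#\bp_\tau^k \in \Gamma(\dens_s,\rkm)$ and $(\Phi_s\otimes\mathrm{id})_\#\bq_\tau^k \in \Gamma(\dens_s,\rkzm)$; a Taylor expansion of $\|\Phi_s(x)-y\|^2$ at $s=0$ combined with the optimality of $\bp_\tau^k$ and $\bq_\tau^k$ for $\rk$ will produce the one-sided upper bounds
\begin{align*}
\limsup_{s\to 0^+}\frac{\W^2(\rkm,\dens_s)-\W^2(\rkm,\rk)}{s} &\leq 2\int_{\Omega^2}\langle \xi(x), x-y\rangle\dd\bp_\tau^k(x,y),\\
\limsup_{s\to 0^+}\frac{\W^2(\rkzm,\dens_s)-\W^2(\rkzm,\rk)}{s} &\leq 2\int_{\Omega^2}\langle \xi(x), x-z\rangle\dd\bq_\tau^k(x,z).
\end{align*}

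The main obstacle is the negative sign in front of $\frac{1}{4\tau}\W^2(\rkzm,\dens_s)$: forming the $\limsup$ of $(\Psi(\dens_s)-\Psi(\rk))/s\geq 0$ converts that contribution into a $-\liminf$, for which the plan-based upper bound above points in the wrong direction. To close this gap I would invoke the absolute continuity of $\rk$ with respect to $\mathcal L^d$ established in Theorem \ref{ExistenceBDF2}: by Brenier's theorem the optimal transport plans from $\rk$ to any target in $\denses$ are unique. A standard compactness argument — the optimal plans between $\dens_s$ and $\rkzm$ form a narrowly precompact family (tight second moments) and any narrow limit is an optimal plan between $\rk$ and $\rkzm$, hence necessarily equal to $\bq_\tau^k$ by uniqueness — then upgrades the plan-based estimate to a genuine one-sided derivative, and the analogous statement holds for $\bp_\tau^k$. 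Inserting these derivatives together with the energy variation into $\frac{1}{s}(\Psi(\dens_s)-\Psi(\rk)) \geq 0$ for $s \downarrow 0$ gives
$$0 \leq \frac{2}{\tau}\int_{\Omega^2}\langle\xi(x),x-y\rangle\dd\bp_\tau^k(x,y) - \frac{1}{2\tau}\int_{\Omega^2}\langle\xi(x),x-z\rangle\dd\bq_\tau^k(x,z) + \int_\Omega\bigl[-\operatorname{div}(\xi)(\rk)^m + \langle\nabla V,\xi\rangle\rk + \langle\nabla W\ast\rk,\xi\rangle\rk\bigr]\dd x,$$
and running the same argument with $-\xi$ reverses the inequality, producing the equality \eqref{eq:DiscreteEL}.
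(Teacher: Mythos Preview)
Your proposal is correct and follows essentially the same route as the paper: perturb $\rk$ along the flow of $\xi$, compute the first variation of $\nrg$ via \eqref{eq:repcont}, and differentiate the Wasserstein terms using the absolute continuity of $\rk$. The only cosmetic difference is that the paper packages the Wasserstein differentiability (including the delicate negative term) into a single citation of \cite[Theorem~8.13]{villani} and then sets the derivative of $\Psi$ equal to zero, whereas you unpack that result by hand (pushforward of plans, compactness, Brenier uniqueness) and recover the equality via the $\pm\xi$ trick.
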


\begin{proof}
Fix $\rk, \rkm, \rkzm$ and $\xi\in C^\infty_c(\Omega,\Rd)$. We consider the perturbation $\dens_s$ of $\rk$ as the solution of the continuity equation with velocity field $\xi$ starting at $\rk$, i.e., $\dens_s$ is the solution of \eqref{eq:cont}.
To actually compute the first variation of the heat energy $\heat$ we use exact value of the derivative of $\det(\operatorname{D}\Phi_s)$ and the explicit representation of the perturbed density $\dens_s$, given in \eqref{eq:repcont}, to obtain as limit of the difference quotient
\begin{align*}
\frac{\dd}{\dd s}\left[\heat(\dens_s)\right]_{s=0} =  \lim_{s\to 0}	\frac1{s}\left( \heat(\dens_s) - \heat(\rk) \right) = - \lim_{s\to 0} \int_\Omega  \frac1{s}\log(\det(\operatorname{D}\Phi_s(x))) \, \rk(x) \dd x = - \int_\Omega \operatorname{div} (\xi) \, \rk \dd x.
\end{align*}
Similarly, we can compute the first variations of the internal energy $\tU_m$ for $m>1$, the external potential $\tV$, and the interaction energy $\tW$. The first variation of the energy $\nrg$ along the solution to the continuity equation amounts to
\begin{align}
	\frac{\dd}{\dd s}\left[\nrg(\dens_s) \right]_{s=0}  = \int_\Omega - \operatorname{div} (\xi) \, (\rk)^m + \langle \xi , \nabla V \rangle \,  \rk + \langle \xi , \nabla W \ast \rk \rangle \,  \rk \dd x. \label{eq:discaux1}
\end{align}
The differentiability of the quadratic $L^2$-Wasserstein distance $\W$ along the solution $\dens_s$ of the continuity equation is more technical, for the proof we refer to \cite{ags,villani}. 
Since $\rkzm,\rkm,\rk,\dens_s$ are all absolutely continuous measures, Theorem 8.13 from \cite{villani} is applicable and we can conclude:
\begin{align}
	\frac{\dd}{\dd s}\left[4\W^2(\rkm,\dens_s)-\W^2(\rkzm,\dens_s)\right]_{s=0}= 8 \int_{\Omega^2} \langle \xi(x),x-y\rangle  \dd \bp_\tau^k(x,y) - 2\int_{\Omega^2} \langle \xi(x),x-z\rangle  \dd \bq_\tau^k(x,z), \label{eq:discaux2}
\end{align}
where $\bp_\tau^k\in\Gamma(\rk,\rkm)$ and $\bm{q}_\tau^k\in\Gamma(\rk,\rkzm)$ are optimal transport plans.
Since $\rk$ is a minimizer of the BDF2 penalization $\Psi(\tau,\rkzm,\rkm;\, \cdot\, )$ and since $s\mapsto \Psi(\tau,\rkzm,\rkm;\dens_s )$ is differentiable at $s=0$,
\begin{align*}
	0  =  \frac{\dd}{\dd s} \left[\Psi(\tau,\rkzm,\rkm;\dens_s)\right]_{s=0} 
	= & \frac{1}{4\tau} \frac{\dd}{\dd s}\left[4\W^2(\rkm,\dens_s)-\W^2(\rkzm,\dens_s)\right]_{s=0} + \frac{\dd}{\dd s}\left[\nrg(\dens_s) \right]_{s=0} \\
	 = & \frac{2}{\tau} \int_{\Omega^2} \langle \xi(x), x-y \rangle  \dd \bp_\tau^k(x,y) -\frac{1}{2\tau} \int_{\Omega^2} \langle \xi(x),x-z \rangle \dd \bq_\tau^k(x,z) \\
   & +  \int_{\Omega}  - \operatorname{div} (\xi) \, (\rk)^m +  \langle \xi , \nabla V\rangle \, \rk + \langle \xi , \nabla W \ast \rk \rangle \, \rk \dd x.
\end{align*}
Indeed, we have the desired equality \eqref{eq:DiscreteEL}. 
\end{proof}

The already obtained regularity results for the interpolated solution $\bar\dens_\tau$ are not sufficient to pass to the limit in the first term of the discrete Euler-Lagrange equation \eqref{eq:DiscreteEL}.
Nevertheless, the following bounds in the $BV(\Omega)$-norm of $(\rk)^m$ are sufficient to obtain the desired regularity results.
These estimates can be derived from the discrete Euler-Lagrange equation quite naturally. 

\begin{prop}[Step size independent $BV(\Omega)$-estimate]\label{prop:bv}
Fix a time horizon $T>0$.
  There exists a constant $C$, depending only on $d_1$ to $d_4$ and $T$, 
  such that the corresponding discrete solutions $(\dens_\tau^k)_{k\in\N}$ satisfy for all $\tau\in(0,\tau_*)$ and for all $\kinN$ with $k \tau\leq T$:
	\begin{align}\label{eq:bv}
		\left\|(\rk)^{m}\right\|_{\BV} &\leq C \left(1 + \frac{\W(\rk,\rkm)}{\tau} + \frac{\W(\rk,\rkzm)}{\tau} \right).
	\end{align}
\end{prop}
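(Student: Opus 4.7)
\textbf{Proof plan for Proposition \ref{prop:bv}.} The plan is to read off the total variation of $(\rk)^m$ directly from the discrete Euler--Lagrange equation \eqref{eq:DiscreteEL}, using the characterization
\[
	V((\rk)^m,\Omega) = \sup \left\{ \int_\Omega (\rk)^m \operatorname{div}(\xi)\dd x : \xi\in C^\infty_c(\Omega,\Rd),\ \|\xi\|_\infty \le 1 \right\}.
\]
Given any such test field $\xi$, rearranging \eqref{eq:DiscreteEL} gives
\[
	\int_\Omega (\rk)^m\operatorname{div}(\xi)\dd x = \int_\Omega \langle \xi,\nabla V\rangle \rk \dd x + \int_\Omega \langle \xi,\nabla W\ast \rk\rangle \rk \dd x + \frac{2}{\tau}\int_{\Omega^2}\!\!\langle \xi(x),x-y\rangle \dd\bp_\tau^k - \frac{1}{2\tau}\int_{\Omega^2}\!\!\langle\xi(x),x-z\rangle \dd\bq_\tau^k,
\]
so it suffices to bound each summand on the right-hand side in terms of $\|\xi\|_\infty$, the second moment, and the two Wasserstein increments.

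First I would dispatch the drift terms using Assumption \ref{ass:VW}: $\|\nabla V(x)\|\le d_1(1+\|x\|^2)$ and $\|\nabla W\ast\rk(x)\|\le d_1\int(1+\|x-y\|^2)\rk(y)\dd y\le d_1(1+2\|x\|^2+2\M(\rk))$, which combined with the uniform second-moment bound of Theorem \ref{thm:bounds} and $\|\xi\|_\infty\le 1$ yields
\[
	\left|\int_\Omega \langle\xi,\nabla V\rangle \rk\dd x\right| + \left|\int_\Omega \langle \xi,\nabla W\ast \rk\rangle \rk\dd x\right| \le C(1+\M(\rk)) \le C.
\]
For the two Wasserstein terms I would apply Cauchy--Schwarz on the probability measures $\bp_\tau^k$ and $\bq_\tau^k$: since the plans are optimal,
\[
	\left|\int_{\Omega^2}\langle\xi(x),x-y\rangle\dd\bp_\tau^k\right| \le \|\xi\|_\infty\!\int_{\Omega^2}\!\|x-y\|\dd\bp_\tau^k \le \|\xi\|_\infty\left(\int_{\Omega^2}\!\|x-y\|^2\dd\bp_\tau^k\right)^{1/2} = \|\xi\|_\infty\,\W(\rk,\rkm),
\]
and analogously for $\bq_\tau^k$ with $\W(\rk,\rkzm)$. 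Taking the supremum over admissible $\xi$ then produces exactly the claimed estimate on the total variation seminorm $V((\rk)^m,\Omega)$.

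It remains to control $\|(\rk)^m\|_{L^1}$. For $m=1$, $(\rk)^1=\rk$ is a probability density so the $L^1$-norm equals $1$. For $m>1$ one has $\|(\rk)^m\|_{L^1}=(m-1)\tU_m(\rk)$, which is uniformly bounded by Theorem \ref{thm:bounds}. Adding this $\mathcal{O}(1)$ contribution to the total variation bound gives \eqref{eq:bv}.

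The step I expect to demand the most care is the Wasserstein estimate: the bound has to come out linear in $\W/\tau$ rather than quadratic, which is why one must pair $\|\xi\|_\infty$ with the $L^1(\bp_\tau^k)$-norm of $\|x-y\|$ and invoke Cauchy--Schwarz on the probability measure, rather than use a pointwise bound $\|x-y\|\le \mathrm{diam}(\Omega)$ (which would fail on $\Omega=\Rd$) or a quadratic estimate (which would produce $\W^2/\tau$). The remaining arguments are merely the bookkeeping of plugging Assumption \ref{ass:VW} and the uniform second-moment bound into the Euler--Lagrange identity.
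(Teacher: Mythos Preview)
Your proposal is correct and follows essentially the same route as the paper: rearrange the discrete Euler--Lagrange equation \eqref{eq:DiscreteEL}, bound the drift terms via Assumption~\ref{ass:VW} and the uniform second-moment estimate of Theorem~\ref{thm:bounds}, and control the transport terms by Cauchy--Schwarz/Jensen on the optimal plans to get the linear $\W/\tau$ dependence. Your handling of the $L^1$-part is in fact slightly more careful than the paper's, since you treat the case $m=1$ separately (where $\|(\rk)^1\|_{L^1}=1$ directly).
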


\begin{proof} 
The $L^1(\Omega)$-norm of $(\rk)^m$ is equal to $(m-1)\tU_m$ evaluated at $\rk$. 
Hence, we can bound the first term in the definition of the $BV(\Omega)$-norm uniformly by the classical estimates \eqref{eq:classicalbounds}. 
In order to estimate the variation of $(\rk)^m$ we estimate the term inside the supremum of the definition of $V((\rk)^m,\Omega)$. 
Thus let $\xi\in C^\infty_c(\Omega,\Rd)$ with $\left\|\xi\right\|_\infty \leq 1$, then we can use the discrete Euler-Lagrange equations \eqref{eq:DiscreteEL} to substitute
\begin{align}\label{eq:bvest}
\begin{split}
			\int_\Omega (\rk)^m \, \operatorname{div} (\xi) \dd x =& \int_{\Omega} \langle \xi(x), \nabla V \rangle \rk(x) + \langle \xi(x),\nabla W \ast \rk \rangle \rk(x)\dd x  + \frac{2}{\tau} \int_{\Omega^2} \langle \xi(x),x-y\rangle \ \mathrm{d} \bp_\tau^k(x,y) \\
			&  -\frac{1}{2\tau} \int_{\Omega^2} \langle \xi(x), x-z \rangle \dd \bq_\tau^k(x,z).
\end{split}
 \end{align}
By Assumption \ref{ass:VW} we have quadratic growth bounds for $\nabla V$ and $\nabla W$, so using the step size independent bounds on the second moment \eqref{eq:classicalbounds}, we can estimate the first terms in \eqref{eq:bvest} as follows:
\begin{align*}
	\int_{\Omega} \langle \xi(x), \nabla V \rangle \rk(x) + \langle \xi(x),\nabla W \ast \rk \rangle \rk(x) \dd x \leq 2 d_1 \left\|\xi\right\|_\infty  (1+\M(\rk)) \leq 2 d_1   (1+C).
\end{align*}
The second integral on the right-hand side of \eqref{eq:bvest} can be estimated using Jensen's inequality
\begin{align*}
	\left| \int_{\Omega^2} \langle \xi(x),x-y\rangle \ \mathrm{d} \bp_\tau^k(x,y) \right|
	\leq \left\|\xi\right\|_\infty  \left( \int_{\Omega^2} \left\|x-y\right\|^2 \dd \bp_\tau^k(x,y) \right)^{1/2} 
		\leq  \W(\rk,\rkm),
		\end{align*}
and similar for the third integral of the right-hand side of \eqref{eq:bvest}.
 Hence, we have the following upper bound for the variation of $(\rk)^m$:
 \begin{align*}
 V( (\rk)^m,\Omega) & \leq C \left(1 + \frac{\W(\rk,\rkm)}{\tau} + \frac{\W(\rk,\rkzm)}{\tau} \right).
	\end{align*}
In conclusion, the discrete solution $(\rk)_\kinN$ satisfies the desired bound \eqref{eq:bv}.
\end{proof}

\begin{thm}[Step size independent $L^2(0,T;BV(\Omega))$-estimate]\label{thm:bv}
Fix a time horizon $T>0$.
  There exists a constant $C$, depending only on $d_1$ to $d_4$ and $T$, 
  such that the corresponding interpolated solution $\bar\dens_\tau$ satisfies for each $\tau \in(0,\tau_*)$:
  \begin{align}\label{eq:bvestint}
  	\left\| (\bar\dens_\tau)^m \right\|_{L^2(0,T;BV(\Omega))} \leq C.
  \end{align}
\end{thm}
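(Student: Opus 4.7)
The strategy is to integrate in time the pointwise-in-time bound \eqref{eq:bv} from Proposition \ref{prop:bv} and then exploit the classical estimate on the kinetic energy from \eqref{eq:classicalbounds} to control the resulting telescoping-type sum.

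Fix $T>0$ and set $N_T := \max\{k\in\N \mid k\tau \leq T\}$, so that $N_T \tau \leq T < (N_T+1)\tau$. Since $\bar\dens_\tau$ is piecewise constant, equal to $\rk$ on $((k-1)\tau, k\tau]$, we have
\begin{align*}
\left\| (\bar\dens_\tau)^m \right\|_{L^2(0,T;\BV)}^2 = \sum_{k=1}^{N_T+1} \int_{((k-1)\tau, k\tau]\cap(0,T)} \left\| (\rk)^m \right\|_{\BV}^2 \dd t \leq \tau \sum_{k=1}^{N_T+1} \left\| (\rk)^m \right\|_{\BV}^2.
\end{align*}
Squaring \eqref{eq:bv} and using $(a+b+c)^2 \leq 3(a^2+b^2+c^2)$, I would then insert
\begin{align*}
\left\| (\rk)^m \right\|_{\BV}^2 \leq 3C^2\left(1 + \frac{\W^2(\rk,\rkm)}{\tau^2} + \frac{\W^2(\rk,\rkzm)}{\tau^2}\right).
\end{align*}
Multiplying by $\tau$ and summing over $k=1,\dots,N_T+1$ yields three contributions: the first is $3C^2(N_T+1)\tau \leq 3C^2(T+\tau_*)$; the second is $3C^2$ times the telescoping sum $\sum_k \tau^{-1}\W^2(\rk,\rkm)$, which is uniformly bounded thanks to the kinetic energy estimate in \eqref{eq:classicalbounds}.

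The only mildly delicate point is the third contribution, involving two-step distances $\W^2(\rk,\rkzm)$, which do not appear directly in \eqref{eq:classicalbounds}. Here I would use the triangle inequality together with $(a+b)^2 \leq 2a^2 + 2b^2$ to get
\begin{align*}
\W^2(\rk,\rkzm) \leq 2\W^2(\rk,\rkm) + 2\W^2(\rkm,\rkzm),
\end{align*}
so that the sum $\sum_k \tau^{-1}\W^2(\rk,\rkzm)$ is controlled by twice the one-step kinetic energy sum, plus the boundary term $\tau^{-1}\W^2(\dens_\tau^0,\dens_\tau^{\minus 1})$ coming from $k=1$. The latter is bounded by $d_3$ thanks to Assumption (I1). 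All three pieces are therefore bounded by a constant depending only on $d_1,\dots,d_4$ and $T$, which completes the argument.

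I do not expect any serious obstacle; the two small points that require care are (i) handling the $k=1$ term involving the virtual initial datum $\dens_\tau^{\minus 1}$ via Assumption (I1), and (ii) the passage from the pointwise bound $\W^2(\rk,\rkzm)$ to the one-step kinetic energy via the triangle inequality trick. Everything else is a direct combination of Proposition \ref{prop:bv} and Theorem \ref{thm:bounds}.
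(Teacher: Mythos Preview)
Your proposal is correct and follows essentially the same route as the paper: square the pointwise $\BV$-bound from Proposition~\ref{prop:bv}, sum over time steps, and reduce the two-step distances $\W^2(\rk,\rkzm)$ to one-step ones via the triangle inequality before invoking the kinetic energy bound in \eqref{eq:classicalbounds}. The only cosmetic difference is that the paper absorbs the $k=1$ boundary term directly into the kinetic energy sum (which in \eqref{eq:classicalbounds} already starts at $k=0$), whereas you single it out and bound it separately via Assumption~(I1); both are fine.
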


\begin{proof}
	We use the classical estimates on the kinetic energy \eqref{eq:classicalbounds} and the result from Proposition \ref{prop:bv} to estimate the $L^2(0,T;BV(\Omega))$-norm of $(\bar\dens_\tau)^m$. Let $N_T:=\max\{N \in\N\mid N\tau\leq T\}$, then we have 
	\begin{align*}
	\left\| (\bar\dens_\tau)^m \right\|_{L^2(0,T;BV(\Omega))}^2  \leq \sum_{k=1}^{N_T+1} \int_{(k-1)\tau}^{k\tau}  \left\|(\rk)^{m}\right\|_{\BV}^2 \dd t &\leq  C \sum_{k=1}^{N_T+1} \tau \left(1 + \frac{\W(\rk,\rkm)}{\tau} + \frac{\W(\rk,\rkzm)}{\tau} \right)^2.
	\end{align*}
	By the triangle inequality $\W(\rk,\rkzm)\leq \W(\rk,\rkm) + \W(\rkm,\rkzm)$	in combination with a Cauchy type inequality we obtain 
	\begin{align*}
	\left\| (\bar\dens_\tau)^m \right\|_{L^2(0,T;BV(\Omega))}^2 &\leq C \sum_{k=1}^{N_T+1} \left[ \tau + \frac{\W^2(\rk,\rkm)}{\tau} + \frac{\W^2(\rkm,\rkzm)}{\tau} \right] \leq C(T +\tau) + C \sum_{k=0}^{N_T+1} \frac{\W^2(\rk,\rkm)}{\tau}.	
	\end{align*}
	Finally, we can conclude, under the step size independent bounds on the kinetic energy \eqref{eq:classicalbounds}, the desired estimate \eqref{eq:bvestint} for some universal constant $C$, which only depends on $d_1$ to $d_4$ and $T$, but not on the step size $\tau\in(0,\tau_*)$.
\end{proof}

\section{Convergence}\label{sec:Convergence}
 
 In this section we prove our main theorem, the strong convergence of the approximation $\bar\dens_\tau$ to the solution of the non-linear Fokker-Planck equation.
 The convergence in the strong $L^m(0,T;L^m(\Omega))$-topology follows by the improved $L^2(0,T;BV(\Omega))$-estimates \eqref{eq:bvestint} and by a general version of the Aubin-Lions Theorem \cite[Theorem 2]{rossi2003tightness}, which is recalled in Appendix \ref{app:Aubin}.

\begin{thm}[Strong convergence in $L^m(0,T;L^m(\Omega))$]\label{thm:convs}
Under the same assumptions as in Theorem \ref{thm:convn} and given the vanishing sequence $(\tau_n)_\ninN$ of step sizes $\tau_n\in(0,\tau_*)$ and the limit curve $\dens_*$ therein,
then there exists a further (non-relabelled) subsequence $(\tau_n)_\ninN$   such that for all $T>0$:
\begin{enumerate}
		\item[(a)] In the case of an open and bounded set $\Omega \subset\Rd$ with Lipschitz-continuous boundary $\partial \Omega$, we have:		\begin{align*}
			\bar\dens_{\tau_n}(t) \rightarrow \dens_*(t) \qquad &\text{in } L^m(0,T;L^m(\Omega)) \text{ as } n \to\infty.
		\end{align*}
		\item[(b)] In the case of the entire space, i.e., $\Omega=\Rd$, we have for every open and bounded set $\tilde\Omega \Subset \Rd$:
		\begin{align*}
			\bar\dens_{\tau_n}(t) \rightarrow \dens_*(t) \qquad &\text{in } L^m(0,T;L^m(\tilde\Omega)) \text{ as } n \to\infty.
		\end{align*} 
\end{enumerate}
\end{thm}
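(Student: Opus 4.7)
The plan is to apply the Aubin--Lions--Rossi--Savar\'e compactness theorem recalled in Appendix~\ref{app:Aubin} to the family $(\bar\dens_{\tau_n})_{\ninN}$. I will use as coerciveness functional $\mathcal{N}:L^m(\Omega)\to[0,\infty]$ defined by
\[
\mathcal{N}(\dens):=\|\dens^m\|_{\BV}+\M(\dens)\text{ for }\dens\in\Km,\qquad\mathcal{N}(\dens):=+\infty\text{ otherwise},
\]
and as weak pseudo-distance the Wasserstein distance $\W$ (extended by $+\infty$ off $\denses$).

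I first treat case (a). On a bounded Lipschitz domain, I claim that the sublevel sets of $\mathcal{N}$ are relatively compact in $L^m(\Omega)$: the BV Sobolev embedding $BV(\Omega)\hookrightarrow L^{d/(d-1)}(\Omega)$ forces $\dens^m$ into a bounded subset of $L^{d/(d-1)}(\Omega)$, which is equi-integrable in $L^1$ (and makes $\dens^m$ equi-integrable in $L^m$ since $md/(d-1)>m$); the compact embedding $BV(\Omega)\hookrightarrow\hookrightarrow L^1(\Omega)$ from \cite[Theorem~1.19]{giusti1984minimal} then produces pointwise-a.e.\ convergent subsequences of $\dens^m$, and Vitali's theorem upgrades these to $L^m$-convergence of $\dens$ itself. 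The two hypotheses of the abstract theorem are then immediate: the uniform coerciveness
\[
\sup_{\ninN}\int_0^T\mathcal{N}(\bar\dens_{\tau_n}(t))^2\,\mathrm{d}t<\infty
\]
is exactly Theorem~\ref{thm:bv} combined with the second-moment estimate in \eqref{eq:classicalbounds}, while the weak integral equicontinuity
\[
\lim_{h\downarrow 0}\sup_{\ninN}\int_0^{T-h}\W(\bar\dens_{\tau_n}(t+h),\bar\dens_{\tau_n}(t))\,\mathrm{d}t=0
\]
follows by Cauchy--Schwarz from the H\"older-type estimate \eqref{eq:hcont} together with the uniform $L^2(0,T)$-bound on the discrete derivatives $A_n$ established in the proof of Theorem~\ref{thm:convn}. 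The abstract theorem then delivers a further (non-relabelled) subsequence converging strongly in $L^m(0,T;L^m(\Omega))$, and the pointwise-in-$t$ narrow convergence from Theorem~\ref{thm:convn} forces the limit to coincide with $\dens_*$. A standard diagonal argument in $T\to\infty$ produces a single subsequence valid for all $T>0$.

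For case (b) with $\Omega=\R^d$ the BV embedding is no longer globally compact, so I would localize: for each $\ell\in\N$, apply case (a) on the open Lipschitz ball $B_\ell(0)$ (the BV-bound on $(\bar\dens_{\tau_n})^m$ restricts to a uniform $BV(B_\ell(0))$-bound and the Wasserstein equicontinuity is unchanged), and extract a diagonal subsequence over $\ell$. Since every open bounded $\tilde\Omega\Subset\R^d$ is contained in some $B_\ell(0)$, this yields the desired statement.

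The main obstacle is the upgrade of the direct $L^1$-compactness on the sublevels of $\mathcal{N}$ (immediate from \cite[Theorem~1.19]{giusti1984minimal}) to the $L^m$-compactness actually required for strong $L^m(0,T;L^m(\Omega))$ convergence; this is where the BV Sobolev embedding together with Vitali's theorem must be combined carefully with the second-moment control to rule out mass concentration. Once this compact embedding into $L^m(\Omega)$ is in place, the conclusion follows mechanically from the Rossi--Savar\'e theorem.
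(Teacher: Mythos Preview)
Your approach---Rossi--Savar\'e Aubin--Lions with a BV-based coercive functional and $\W$ as pseudo-distance---is the paper's approach, and your Vitali argument for $L^m$-compactness of the sublevels is in fact more explicit than what the paper writes. There are, however, two places where the paper is more careful and where your justification has a gap.

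First, the equicontinuity in the form $\lim_{h\downarrow0}\sup_{n}\int_0^{T-h}\W(\bar\dens_{\tau_n}(t+h),\bar\dens_{\tau_n}(t))\,\mathrm{d}t=0$ does \emph{not} follow from \eqref{eq:hcont} by Cauchy--Schwarz alone: the lower integration limit in \eqref{eq:hcont} is $(t-\tau_n)^+$, and after Fubini one only gets the bound $(h+\tau_n)\,\|A_n\|_{L^1(0,T)}$, which does not vanish uniformly in $n$ as $h\to0$. This is exactly why the paper states Theorem~\ref{thm:aubin} with the \emph{relaxed averaged} equicontinuity condition (the $\limsup_{n}$ is taken inside the $h$-limit, so $\tau_n\to0$ first) and remarks afterwards that the usual $\sup_n$ condition has been replaced. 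Second, in case~(b) your phrase ``the Wasserstein equicontinuity is unchanged'' hides a genuine issue: the restrictions $\bar\dens_{\tau_n}|_{B_\ell}$ are not probability measures, so $\W$ is not directly available as a pseudo-distance on $L^m(B_\ell)$. The paper resolves this by introducing
\[
\tilde g(\dens,\nu):=\inf\bigl\{\W(\tilde\dens,\tilde\nu)\ :\ \tilde\dens\in\Sigma(\dens),\ \tilde\nu\in\Sigma(\nu)\bigr\}
\]
over extensions with uniformly controlled second moment, and then checks lower semi-continuity and compatibility of $\tilde g$ with $\tilde\calA$ by hand. A minor further point: the abstract theorem as stated in Appendix~\ref{app:Aubin} yields only convergence in $\mathcal{M}(0,T;L^m)$; the paper uses the uniform $L^\infty(0,T;L^m)$ bound from \eqref{eq:classicalbounds} together with dominated convergence to upgrade to $L^m(0,T;L^m)$, a step you do not mention.
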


\begin{proof}[Proof of Theorem \ref{thm:convs} for $\Omega$ is open and  bounded with Lipschitz-continuous boundary $\partial\Omega$]
	Fix $T>0$. 	
	In order to prove the strong convergence result we use the Aubin-Lions Theorem \ref{thm:aubin} with the underlying Banach space $\X=L^m(\Omega)$. 
	We consider the functional $\calA:L^m(\Omega)\to\R$, defined via
	\begin{align*}
		\calA(\dens):=\begin{cases}
			\left\|\dens^m\right\|_{BV(\Omega)}^2 & \text{if } \dens\in\denses \ \text{and} \ \dens^m \in \BV, \\
			+\infty & \text{else}.
		\end{cases}
	\end{align*} 
	Using the remark in the introductory section about functions of bounded variations it follows that the functional $\calA$ is measurable, lower semi-continuous with respect to the $L^m(\Omega)$-topology, and has compact sublevels. 
	Next, we choose as pseudo-distance $g=\W$ on $L^m(\Omega)$. 
	The $L^2$-Wasserstein distance is lower semi-continuous with respect to the $L^m(\Omega)$-topology and clearly compatible with $\calA$.
	
	Next, we verify the assumption \eqref{eq:aubin} on $(\bar\dens_{\tau_n})_\ninN$ of Theorem \ref{thm:aubin}. By the $L^2(0,T;BV(\Omega))$-estimates of Theorem \ref{thm:bv} it is clear, that the sequence $(\bar\dens_{\tau_n})_\ninN$ is tight with respect to $\calA$, since we have:
	\begin{align*}
		\sup_\ninN \int_0^T \left\|(\bar\dens_{\tau_n}(t))^m\right\|_{BV(\Omega)}^2 \dd t = \sup_\ninN \left\| (\bar\dens_{\tau_n})^m \right\|^2_{L^2(0,T;BV(\Omega))} \leq C < \infty.
	\end{align*}
	For the proof of the relaxed averaged weak integral equicontinuity condition of $(\bar\dens_{\tau_n})_\ninN$ with respect to $\W$, we use the auxiliary function $A_n$ and the estimate \eqref{eq:hcont} from the proof of weak convergence results to obtain:
	\begin{align*}
	\int_0^{T-t} \W(\bar\dens_{\tau_n}(s+t),\bar\dens_{\tau_n}(s)) \dd s 
	\leq  \int_0^{T-t} \int_{(s-\tau_n)^+}^{s+t} A_n(r) \dd r \dd s 
	  \leq (t+\tau_n)  \int_0^T A_n (r)\dd r.
	\end{align*}
	Indeed, using the weak convergence in $L^2$ of $A_n$ to some $A\in L^2_{loc}(0,\infty)$ it follows
	\begin{align*}
	 \liminf_{h\searrow 0} \limsup_{n\to \infty }	 \frac1{h} \int_0^h \int_0^{T-t} \W(\bar\dens_{\tau_n}(s+t),\bar\dens_{\tau_n}(s)) \dd s \dd t 
	\leq   \liminf_{h\searrow 0} \limsup_{n\to \infty } \frac1{h} \int_0^h (t+\tau_n)  \dd t  \int_0^T A_n(t)\dd t =0.
	\end{align*}
	Therefore, we can conclude that there exists a non-relabeled subsequence $(\tau_n)_\ninN$ such that $\bar\dens_{\tau_n}$ converges in $\mathcal{M}(0,T;\Lm)$ to some curve $\dens_+$.	
	Due to the uniform bounds in $L^\infty(0,T;L^m(\Omega))$, we obtain by a dominated convergence argument also convergence in $L^m(0,T;L^m(\Omega))$ as desired.
	Moreover, the limit curves $\dens_+$ and $\dens_*$ have to coincide, since $\bar\dens_{\tau_n}$ converges also in measure to $\dens_+$ and $\dens_*$, so both limits have to be equal.
\end{proof}

In the case of $\Omega=\Rd$ we have to alter the proof given above, since the embedding of $BV(\Rd)$ into $L^1(\Rd)$ is not compact anymore. So we restrict ourself to the open and bounded sets $\tilde\Omega = \mathbb{B}_R(0)$. 
This subset is clearly open and bounded with Lipschitz-continuous boundary $\partial\tilde\Omega$, so the embedding of $BV(\tilde\Omega)$ into $L^1(\tilde\Omega)$ is compact again. 

\begin{proof}[Proof of Theorem \ref{thm:convs} for $\Omega=\Rd$]
	Fix $T>0$. 	
	Without loss of generality we can assume $\tilde\Omega = \mathbb{B}_R(0)$, since every open and bounded subset $\hat\Omega \Subset \Rd$ is contained in a ball with radius $R$ and convergence in $L^m(0,T;L^m(\mathbb{B}_R(0)))$ implies convergence in $L^m(0,T;L^m(\hat\Omega))$. \\
	
	As before, we want to use the Aubin-Lions Theorem \ref{thm:aubin} for the Banach space $L^m(\tilde\Omega)$ equipped with the natural topology induced by the $L^m(\tilde\Omega)$-norm applied to $ (\left.\bar\dens_{\tau_n}\right|_{\tilde\Omega})_\ninN$, the restriction of the density $\bar\dens_{\tau_n}$ to the subspace $\tilde\Omega$.
	In this case we consider the functional $\tilde\calA:L^m(\tilde\Omega)\to\R$, defined via
		\begin{align*}
		\tilde\calA(\dens):=\begin{cases}
			\left\|\dens^m\right\|_{BV(\tilde\Omega)}^2 & \text{if } \dens\in\mathcal{M}_f(\tilde\Omega) \ \text{and} \ \dens^m \in BV(\tilde\Omega), \\
			+\infty & \text{else}.
		\end{cases}
	\end{align*} 
	Now, the functional $\tilde\calA$ is measurable, lower semi-continuous with respect to the $L^m(\tilde\Omega)$ topology, and has compact sublevels. Since $\tilde\calA(\left.\dens\right|_{\tilde\Omega}) \leq \calA(\dens)$, we obtain by the same calculations as above the tightness of $ (\left.\bar\dens_{\tau_n}\right|_{\tilde\Omega})_\ninN$ with respect to $\tilde\calA$.
	
	Since the measure $\left.\dens\right|_{\tilde\Omega}$ does not have unit mass anymore, we cannot consider the $L^2$-Wasserstein distance $\W$ as pseudo-distance anymore.
	However, we can use the following pseudo-distance $\tilde g$:
	\begin{align*}
		\tilde g(\dens,\nu):= \inf \left\{ \W(\tilde{\dens},\tilde{\nu}) \mid \tilde\dens \in \Sigma(\dens), \ \tilde\nu \in \Sigma(\nu)\right\}, \quad \Sigma(\dens):=\left\{ \tilde\dens \in \mathscr{P}(\Rd) \mid \left. \tilde\dens \right|_\Omega =\dens, \M(\tilde\dens) \leq C\right\},
	\end{align*}
	where $C$ is the constant from the classical estimates \eqref{eq:classicalbounds} for the specific $T$. 
	Since $\Sigma(\dens)$ and $\Sigma(\nu)$ are compact sets with respect to the narrow topology, the infimum is attained at some pair $\tilde \dens_*,\tilde \nu_*$.
The pseudo-distance $\tilde{g}$ is compatible with $\tilde\calA$, i.e., if $\dens^m,\nu^m\in BV(\tilde\Omega)$ and $\tilde g(\dens,\nu)=0$ then $\dens=\nu$ a.e. on $\tilde\Omega$.
	The lower semi-continuity of the pseudo-distance $\tilde{g}$ with respect to the $L^m(\tilde\Omega)$-topology can be proven as follows. 
	Choose to convergent sequences $\dens_n\rightarrow\dens$ and $\nu_n\to \nu$ in $L^m(\tilde\Omega)$ with $\sup_n \tilde g(\dens_n,\nu_n)<\infty$. 
	By the remark from above, there exists $\tilde\dens_n,\tilde\nu_n$ such that $\tilde g (\dens_n,\nu_n)=\W(\tilde\dens_n,\tilde\nu_n)$. 
	Since the second moments are by definition of $\Sigma(\dens)$ uniformly bounded, we can extract a non-relabeled convergent subsequence which converges narrowly to $\tilde\dens\in\Sigma(\dens),\tilde\nu\in\Sigma(\nu)$. 
	By the lower semi-continuity of $W$ with respect to narrow convergence, we get in the end
	\begin{align*}
		\tilde g(\dens,\nu) \leq \W(\tilde \dens,\tilde \nu) \leq \liminf_{\ntoinf} \W(\tilde \dens_n, \tilde \nu_n) = \liminf_\ntoinf \W(\dens_n,\nu_n).
	\end{align*}
	Therefore, the pseudo-distance $\tilde g$ is lower semi-continuous with respect to the $L^m(\tilde\Omega)$-topology.
	Thus, $\tilde g$ satisfies the assumptions of theorem \ref{thm:aubin}.
	Further, one has $\tilde g(\left.\dens\right|_{\tilde\Omega},\left.\nu\right|_{\tilde\Omega}) \leq \W(\dens,\nu)$.
	Thus we derive, using the same proof as above, the equicontinuity of $ (\left.\bar\dens_{\tau_n}\right|_{\tilde\Omega})_\ninN$ with respect to the pseudo-distance $\tilde g$.
	
	Hence, we can conclude that there exists a non-relabeled subsequence of $\left.\bar\dens_{\tau_n}\right|_{\tilde\Omega}$ which converges in $\mathcal{M}(0,T;L^m(\tilde\Omega))$ to some limit $\dens_+$.
	As before, we use the uniform bounds in $L^\infty(0,T;L^m(\tilde\Omega))$, to obtain the strong convergence in $L^m(0,T;L^m(\tilde\Omega))$ by a dominated convergence argument.
	Moreover, the limit curves $\dens_+$ and $\left.\dens_*\right|_{\tilde\Omega}$ have to coincide on $\tilde\Omega$, since $\left.\bar\dens_{\tau_n}\right|_{\tilde\Omega}$ converges also in measure on $\tilde\Omega$ to $\dens_+$ and $\left.\dens_*\right|_{\tilde\Omega}$, so both limits have to be equal on $\tilde\Omega$.
	Two diagonal arguments in $T\to\infty$ and $R\to \infty$ yield the desired convergence result.
	\end{proof}

To complete the proof of the main theorem \ref{thm:mainthm}, we have to validate that $\dens_*$ is indeed a solution to \eqref{eq:FP} in the sense of distributions.
\begin{thm}[Solution of the non-linear Fokker-Planck equation]
Under the same assumptions as in Theorem \ref{thm:convs}, consider the vanishing sequence $(\tau_n)_\ninN$ of step sizes $\tau \in(0,\tau_*)$ and the limit curve $\dens_*$ defined there.
  The limit curve $\dens_*$ is a solution to the non-linear Fokker-Planck equation with no-flux boundary condition \eqref{eq:FP} in the sense of distributions, i.e., we have for each test function $\psi\in C^\infty_c([0,\infty)\times\Omega)$:
	\begin{align*}
		 \int_0^\infty \int_\Omega - \Delta\psi \, \dens_*^m + \langle\nabla\psi,\nabla V \rangle \, \dens_*  + \langle\nabla\psi,\nabla W \ast \dens_*\rangle \, \dens_* \dd x \dd t 
		=& \int_0^\infty \int_\Omega \partial_t \psi \, \dens_* \dd x\dd t+ \int_\Omega  \psi(0) \, \dens^0 \dd x.
	\end{align*}
\end{thm}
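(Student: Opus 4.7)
The goal is to pass to the limit $\tau_n\searrow 0$ in the discrete Euler--Lagrange identity of Theorem~\ref{DiscInequaDens}. Fix $\psi\in C^\infty_c([0,\infty)\times\Omega)$. For each $k\geq 1$ the vector field $\xi(x):=\nabla\psi(k\tau,x)$ belongs to $C^\infty_c(\Omega,\R^d)$ and satisfies $\operatorname{div}\xi=\Delta\psi(k\tau,\cdot)$; inserting $\xi$ into \eqref{eq:DiscreteEL}, multiplying by $\tau$, and summing over $k\geq 1$ (only finitely many terms are non-zero since $\psi$ has compact time support) converts the discrete identity into
\begin{align*}
\tau\sum_{k\geq 1}\int_\Omega \bigl[-\Delta\psi(k\tau)(\rk)^m + \langle\nabla\psi(k\tau),\nabla V\rangle\rk + \langle\nabla\psi(k\tau),\nabla W\ast\rk\rangle\rk\bigr]\dd x = I^{(1)}_\tau + I^{(2)}_\tau,
\end{align*}
where $I^{(1)}_\tau := -2\sum_{k\geq 1}\int_{\Omega^2}\langle\nabla\psi(k\tau,x),x-y\rangle\dd\bp_\tau^k$ and $I^{(2)}_\tau := \tfrac{1}{2}\sum_{k\geq 1}\int_{\Omega^2}\langle\nabla\psi(k\tau,x),x-z\rangle\dd\bq_\tau^k$.

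\textbf{Taylor expansion and summation by parts.} A second-order Taylor expansion of $\psi(k\tau,\cdot)$ at $x$, combined with the marginal properties of the optimal plans, yields $\int\langle\nabla\psi(k\tau,x),x-y\rangle\dd\bp_\tau^k = \int\psi(k\tau)\dd(\rk-\rkm) + R^{1,k}$, with $|R^{1,k}|\leq\tfrac{1}{2}\|D^2\psi\|_\infty\W^2(\rk,\rkm)$, and analogously for $\bq_\tau^k$. The classical kinetic estimate in \eqref{eq:classicalbounds} gives $\sum_k[\W^2(\rk,\rkm)+\W^2(\rk,\rkzm)]\leq C\tau\to 0$, so all such remainders are negligible. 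Using the index shifts $\sum_{k\geq 1}\psi(k\tau)\rkm = \psi(\tau)\dens_\tau^0 + \sum_{k\geq 1}\psi((k{+}1)\tau)\rk$ and $\sum_{k\geq 1}\psi(k\tau)\rkzm = \psi(\tau)\dens_\tau^{\minus 1} + \psi(2\tau)\dens_\tau^0 + \sum_{k\geq 1}\psi((k{+}2)\tau)\rk$, one obtains, up to vanishing error,
\begin{align*}
I^{(1)}_\tau + I^{(2)}_\tau = \sum_{k\geq 1}\int_\Omega \bigl[-\tfrac{3}{2}\psi(k\tau) + 2\psi((k{+}1)\tau) - \tfrac{1}{2}\psi((k{+}2)\tau)\bigr]\rk\dd x + B_\tau,
\end{align*}
with boundary term $B_\tau := 2\int\psi(\tau)\dens_\tau^0\dd x - \tfrac{1}{2}\int\psi(\tau)\dens_\tau^{\minus 1}\dd x - \tfrac{1}{2}\int\psi(2\tau)\dens_\tau^0\dd x$. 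A direct Taylor expansion then yields the BDF2 cancellation $-\tfrac{3}{2}\psi(k\tau)+2\psi((k{+}1)\tau)-\tfrac{1}{2}\psi((k{+}2)\tau) = \tau\,\partial_t\psi(k\tau) + O(\tau^3)$, uniformly on $\mathrm{supp}\,\psi$.

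\textbf{Passage to the limit.} The bulk sum therefore equals $\tau\sum_k\int\partial_t\psi(k\tau)\rk\dd x + O(\tau)$ (using $N_T\leq T/\tau$), and is a Riemann sum converging to $\int_0^\infty\!\int_\Omega\partial_t\psi\,\dens_*\dd x\dd t$ by the pointwise narrow convergence of Theorem~\ref{thm:convn} combined with the uniform continuity of $\partial_t\psi$. For the boundary part, Assumption~\ref{ass:I} gives $\W^2(\dens_\tau^0,\dens^0),\W^2(\dens_\tau^{\minus 1},\dens^0)\leq d_3\tau\to 0$, so both $\dens_\tau^0$ and $\dens_\tau^{\minus 1}$ converge narrowly to $\dens^0$; together with $\psi(j\tau)\to\psi(0)$ uniformly for $j\in\{1,2\}$, $B_\tau \to \bigl(2-\tfrac{1}{2}-\tfrac{1}{2}\bigr)\int\psi(0)\dens^0\dd x = \int\psi(0)\dens^0\dd x$. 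On the left-hand side, replacing $\psi(k\tau)$ by $\psi(t)$ on each slab $((k{-}1)\tau,k\tau]$ at cost $O(\tau)$, the $\Delta\psi\,(\rk)^m$ term passes by the strong $L^m$-convergence of Theorem~\ref{thm:convs} (exploiting $\Delta\psi\in L^{m'}$ with compact spatial support), the drift term by the pointwise narrow convergence of $\bar\dens_{\tau_n}(t)$ since $\nabla\psi\cdot\nabla V\in C_b$ on $\mathrm{supp}\,\psi$, and the interaction term by narrow convergence of the product measures $\bar\dens_{\tau_n}(t)\otimes\bar\dens_{\tau_n}(t)$ together with the uniform second moment bound controlling the quadratic growth of $\nabla W$. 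Assembling both sides produces the desired distributional identity.

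\textbf{Expected obstacle.} The delicate algebraic point is the BDF2 cancellation: the weights $(\tfrac{3}{2},-2,\tfrac{1}{2})$ acting on $(\rk,\rkm,\rkzm)$ transfer under summation by parts to the shifted weights $(-\tfrac{3}{2},2,-\tfrac{1}{2})$ on $(\psi(k\tau),\psi((k{+}1)\tau),\psi((k{+}2)\tau))$, and only this precise combination produces the leading term $\tau\,\partial_t\psi$ (the identities $-\tfrac{3}{2}+2-\tfrac{1}{2}=0$ and $0+1-1=0$ eliminate the zeroth- and second-order contributions in $\tau$, leaving the $O(\tau^3)$ remainder which survives summation against $N_T=O(\tau^{-1})$ terms as $O(\tau^2)$). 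Equally crucial is that \emph{both} $\dens_\tau^0$ and $\dens_\tau^{\minus 1}$ approximate $\dens^0$, as encoded in Assumption~\ref{ass:I}; dropping either approximation would leave a spurious residual initial contribution inside $B_\tau$ incompatible with the Cauchy datum $\psi(0)\dens^0$.
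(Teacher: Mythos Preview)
Your proposal is correct and follows essentially the same route as the paper: insert $\xi=\nabla\psi$ into the discrete Euler--Lagrange identity, Taylor-expand the transport integrals to reduce them to differences $\int\psi\,\dd(\rk-\rho_\tau^{k-j})$ with Wasserstein-controlled remainders, perform a summation by parts so the BDF2 weights act on $\psi$, and pass to the limit using the strong $L^m$ convergence for the nonlinear term and narrow convergence for the rest. The only cosmetic deviations are that the paper evaluates $\psi$ at $(k-1)\tau$ rather than $k\tau$, and replaces your Taylor identity $-\tfrac32\psi(k\tau)+2\psi((k{+}1)\tau)-\tfrac12\psi((k{+}2)\tau)=\tau\,\partial_t\psi(k\tau)+O(\tau^3)$ by an application of the fundamental theorem of calculus yielding the integrand $\tfrac32\partial_t\psi(t)-\tfrac12\partial_t\psi(t+\tau)$; both lead to the same limit and to the same boundary contribution $\int\psi(0)\dens^0$.
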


\begin{proof} 
For simplicity we drop the index $n$ and write $\tau$ and $\tau \searrow  0$. Fix $\psi\in C_c^\infty([0,\infty)\times\Omega)$ and let be $T>0$ and $\tilde\Omega \subset\Omega$ be open and bounded such that $\mathrm{supp}\ \psi\subset [0,T]\times\tilde\Omega$.
Further, define the piecewise constant interpolation $\bar\psi$ of $\psi$ by 
\begin{align*}
	\bar\psi(0)=\psi(0),  \quad \bar\psi(t)=\psi(k\tau)\quad  \text{for } t\in((k-1)\tau,k\tau] \quad \text{and } \kinN.
\end{align*}

For each $\kinN$ insert the smooth function $x\mapsto\nabla \psi((k-1)\tau,x)$ in the discrete Euler-Lagrange equation \eqref{eq:DiscreteEL} for the vector field $\xi\in C^\infty_c(\Omega)$. Summing the resulting equations from $k=1$ to $N_T+1$ and multiplying with $\tau$ yields:
\begin{align*}
		0=& \int_0^T \int_{\tilde\Omega} - \Delta\bar\psi \, (\bar\dens_{\tau})^m + \langle \nabla\bar\psi , \nabla V \rangle \,  \bar\dens_{\tau} + \langle \nabla\bar\psi ,\nabla W \ast \bar\dens_{\tau} \rangle  \, \bar\dens_{\tau} \dd x \dd t \\
		& +\sum_{k=1}^{N_T} \left[ 2 \int_{\tilde\Omega^2} \langle \nabla \psi ((k-1)\tau,x),x-y \rangle   \ \mathrm{d} \bp_\tau^k(x,y) -\frac{1}{2} \int_{\tilde\Omega^2} \langle \nabla \psi ((k-1)\tau,x),x-z \rangle \ \mathrm{d} \bq_\tau^k(x,z) \right]\\
		&=: I_1 + I_2.
\end{align*}
Due to the strong convergence in $L^m(0,T;L^m(\tilde\Omega))$ of $\bar\dens_\tau$ to $\dens_*$ and due to the uniform convergence of $\bar\psi$ to $\psi$\begin{align*}
	\lim_{\tau \searrow  0} I_1=	  &   \int_0^T \int_{\tilde\Omega} - \Delta\psi \, \dens_*^m + \langle \nabla \psi, \nabla V \rangle \, \dens_* + \langle \nabla \psi,  \nabla W \ast \dens_* \rangle \, \dens_* \dd x \dd t.
\end{align*} 
To rewrite $I_2$, we use, as in \cite{jko}, the second order Taylor expansion for a time independent function $\psi$, to obtain
\begin{align*}
& \left| \int_{\tilde\Omega} \psi(y) \rkm(y)  \dd y  - \int_{\tilde\Omega}  \psi(x) \rk(x) \dd x   - \int_{{\tilde\Omega}^2} \langle \nabla \psi(x), y-x \rangle  \ \mathrm{d} \bp_\tau^k(x,y) \right| \\
= & \left| \int_{\tilde\Omega^2} \psi(y) -  \psi(x)  -  \langle \nabla \psi(x), y-x \rangle  \ \mathrm{d} \bp_\tau^k(x,y) \right| \\
\leq &  \frac12 \left\| \operatorname{Hess} \psi \right\|_\infty  \int_{\tilde\Omega^2} \left\|x-y\right\|^2 \mathrm{d} \bp_\tau^k(x,y) \\
= & \frac12 \left\| \operatorname{Hess} \psi \right\|_\infty  \W^2(\rk,\rkm).
	\end{align*}
Replacing the time independent function $\psi$ with $\psi((k-1)\tau)$ yields as approximation of $I_2$
	\begin{align*}
 I_2 = &   \sum_{k=1}^{N_T} \left[\int_{\tilde\Omega} (\frac32\rk -2\rkm + \frac12 \rho_\tau^{k-2})\psi((k-1)\tau) \dd x +  \mathcal{O} ( \W^2(\rk,\rkm)) + \mathcal{O}(\W^2(\rk,\rho_\tau^{k-2}) ) \right].
\end{align*}
We rearrange the sum of the first term in $I_2$ as follows
\begin{align*}
 & \sum_{k=0}^{N_T} \int_{\tilde\Omega} (\frac32\rk -2\rkm + \frac12 \rho_\tau^{k-2})\psi((k-1)\tau) \dd x \\
 = & \sum_{k=0}^{N_T} \int_{\tilde\Omega}  ( \frac 32 \psi((k-1)\tau) - 2 \psi(k\tau) + \frac12 \psi((k+1)\tau) ) \, \rk \dd x - \int_{\tilde\Omega}  (2 \psi(0) - \frac12 \psi(\tau))\, \dens_\tau^0 - \frac12 \psi(0) \, \dens_\tau^{\minus 1} \dd x.
 \end{align*}
Finally, use the fundamental theorem of calculus and the classical estimate \eqref{eq:classicalbounds} to bound the second term in $I_2$, to obtain
\begin{align*}
	I_2	= & - \int_0^T \int_{\tilde\Omega}(\frac32 \partial_t \psi(t) -\frac12 \partial_t \psi(t+\tau)) \, \bar\dens_{\tau}  \dd x \dd t - \int_{\tilde\Omega}  (2 \psi(0) - \frac12 \psi(\tau))\, \dens_\tau^0 - \frac12 \psi(0) \, \dens_\tau^{\minus 1} \dd x +  \mathcal{O} ( \tau ).
\end{align*}
Indeed, combining the narrow convergence of $\bar\dens_\tau$ with the uniform convergence of $\partial_t\psi(t+\tau)$ to $\partial_t\psi(t)$ and with the narrow convergence of the initial data $(\dens_\tau^0,\dens_\tau^{\minus 1})$ to $\dens^0$, the limit of $I_2$ is given by:
\begin{align*}
	\lim_{\tau \searrow 0} I_2
	= & - \int_0^T \int_{\tilde\Omega}  \partial_t \psi(t) \, \dens_*  \dd x \dd t - \int_{\tilde\Omega} \psi(0) \, \dens^0  \dd x.
\end{align*}
Finally, we can conclude that for arbitrary test functions $\psi\in C^\infty_c([0,\infty)\times\Omega)$ the limit curve $\dens_*$ satisfies :
	\begin{align*}
		 \int_0^\infty \int_\Omega - \Delta\psi \dens_*^m + \langle\nabla\psi,\nabla V \rangle \, \dens_*+ \langle   \nabla\psi,\nabla W \ast \dens_* \rangle \, \dens_* \dd x \dd t 
		=& \int_0^\infty \int_\Omega \partial_t \psi \, \dens_* \dd x\dd t+ \int_\Omega \psi(0)\, \dens^0  \dd x.
	\end{align*}
This yields that $\dens_*$ is a distributional solution to the non-linear Fokker-Planck equation \eqref{eq:FP}.
\end{proof}

\begin{appendix}

\section{Proof of the Classical Estimates}\label{app:Bounds}
In this part we fill the technical gap in the proof of the step size independent bounds of Theorem \ref{thm:bounds}. 
The first result is an auxiliary inequality which will be used to derive the step size independent bounds.
Despite the auxiliary character of this inequality, we want to emphasize that this property is of interest by itself, since we can give a precise estimate of the energy decay of the BDF2 scheme in every step.

 \begin{lemma}[Almost energy diminishing]\label{lem:EnergyDim}
For each time step size $\tau\in(0,\tau_*)$ the discrete solution $(\rk)_{k\in\N}$ satisfies
  \begin{align}
    \nrg(\rk) + \frac1{2\tau} \W^2(\rkm,\rk) \leq \nrg(\rkm) + \frac{1}{4\tau} \W^2(\rkzm,\rkm)
    \label{eq:EnergyDim}
  \end{align}
  at each step $k=1,2,\ldots$.
\end{lemma}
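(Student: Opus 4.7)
The natural starting point is the variational characterization of $\rk$ as a minimizer of $\dens\mapsto\Psi(\tau,\rkzm,\rkm;\dens)$, which is the content of the scheme itself. The plan is to test this minimality against the obvious competitor $\rkm$ and then massage the resulting inequality using only the metric triangle inequality on $\W$ together with the elementary Young-type bound $(a+b)^2\le 2a^2+2b^2$. There is no convexity or semi-convexity input needed.

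Concretely, I would first write $\Psi(\tau,\rkzm,\rkm;\rk)\le\Psi(\tau,\rkzm,\rkm;\rkm)$, and use $\W(\rkm,\rkm)=0$ on the right-hand side to obtain
\begin{align*}
\frac{1}{\tau}\W^2(\rkm,\rk)-\frac{1}{4\tau}\W^2(\rkzm,\rk)+\nrg(\rk)
\le -\frac{1}{4\tau}\W^2(\rkzm,\rkm)+\nrg(\rkm).
\end{align*}
This is almost the desired inequality except that on the left-hand side we have the unwanted negative term $-\tfrac{1}{4\tau}\W^2(\rkzm,\rk)$, which depends on the unknown $\rk$. The key step is to rearrange and then estimate it from above in terms of the quantities that are allowed to appear in \eqref{eq:EnergyDim}.

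For that, I would move the offending term to the right and use the triangle inequality $\W(\rkzm,\rk)\le \W(\rkzm,\rkm)+\W(\rkm,\rk)$ together with the Young inequality $(a+b)^2\le 2a^2+2b^2$ to get
\begin{align*}
\W^2(\rkzm,\rk)\le 2\W^2(\rkzm,\rkm)+2\W^2(\rkm,\rk).
\end{align*}
Substituting this bound into the previous inequality, the term $\tfrac{1}{4\tau}\W^2(\rkzm,\rkm)$ collapses into $\tfrac{1}{2\tau}\W^2(\rkzm,\rkm)$, and the factor $\tfrac{1}{\tau}\W^2(\rkm,\rk)$ is reduced to $\tfrac{1}{2\tau}\W^2(\rkm,\rk)$ once the $\tfrac{1}{2\tau}\W^2(\rkm,\rk)$ coming from the Young estimate is absorbed to the left. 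Rearranging yields exactly \eqref{eq:EnergyDim}.

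In terms of difficulty, there is no real obstacle here: the proof is a two-line algebraic manipulation once the minimality is tested against $\rkm$. The only point that deserves emphasis is that the factor $2$ in $(a+b)^2\le 2a^2+2b^2$ is precisely what matches the factor $1/4$ in front of the \textquotedblleft negative\textquotedblright\ Wasserstein term of the BDF2 penalization and the factor $1/2$ in front of the \textquotedblleft positive\textquotedblright\ one; this is the quantitative reason why the BDF2 scheme is only \emph{almost} (rather than genuinely) energy diminishing, and why one should expect an \textquotedblleft energy excess\textquotedblright\ of size $\tfrac{1}{4\tau}\W^2(\rkzm,\rkm)$ to propagate through the iteration.
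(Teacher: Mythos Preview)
Your proposal is correct and follows exactly the same route as the paper: test the minimality of $\rk$ against the competitor $\rkm$, then use the triangle inequality combined with $(a+b)^2\le 2a^2+2b^2$ to bound $\W^2(\rkzm,\rk)$ and absorb the resulting terms. The only slight imprecision is in your verbal bookkeeping of the coefficients (the two $\W^2(\rkzm,\rkm)$ contributions combine to $\tfrac{1}{4\tau}$ on the right, not $\tfrac{1}{2\tau}$), but the algebra works out exactly as you indicate and matches the paper's argument line by line.
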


\begin{proof}
  Since $\rk$ is a minimizer of $\dens \mapsto \Psi(\tau,\rkzm,\rkm; \dens)$, it satisfies 
  $$ \Psi(\tau,\rkzm,\rkm;\rk) \leq \Psi(\tau,\rkzm,\rkm;\dens)$$ 
  for all $\dens\in\denses$.
  For the specific choice $\dens= \rk$, we obtain
  \begin{align}
    \label{eq:predim}
    \frac{1}{\tau} \W^2(\rkm,\rk) - \frac{1}{4\tau} \W^2(\rkzm,\rk) + \nrg(\rk)  
    \leq - \frac{1}{4\tau} \W^2(\rkzm,\rkm) + \nrg(\rkm).
  \end{align}
  By the triangle inequality and the binomial formula,
  \begin{align}
    \W^2(\rkzm,\rk) \le 2\W^2(\rkzm,\rkm) + 2\W^2(\rkm,\rk). \label{eq:binomf}
  \end{align}
  Substitute this in the left-hand side of \eqref{eq:predim}.
  This yields \eqref{eq:EnergyDim}
\end{proof}

\begin{thm}[Classical estimates]
Fix a time horizon $T>0$.
There exists a constant $C$, depending only on $d_1$ to $d_4$ and $T$, 
  such that the corresponding discrete solutions $(\dens_\tau^k)_{k\in\N}$ satisfy
  \begin{align*}
    \sum_{k=0}^{N} \frac{1}{2\tau} \W^2(\rkm,\rk) & \leq  C ,  &   |\tU_m(\dens_\tau^{N}) | & \leq C,  & \M(\rN) &\leq C ,
  \end{align*}
  for all $\tau\in(0,\tau_*)$ and for all $N\in\N$ with $N\tau\le T$. 
 \end{thm}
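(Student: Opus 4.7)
The plan is to start from the almost energy dissipation inequality \eqref{eq:EnergyDim} and telescope. Rewriting \eqref{eq:EnergyDim} as
\[
G_k + \frac{1}{4\tau}\W^2(\rkm,\rk) \leq G_{k-1}, \qquad G_k := \nrg(\rk) + \frac{1}{4\tau}\W^2(\rkm,\rk),
\]
and summing from $k=1$ to $N$ yields
\[
\nrg(\rN) + A_N \leq G_0, \qquad A_N := \sum_{k=1}^N \frac{1}{4\tau}\W^2(\rkm,\rk).
\]
By Assumption \ref{ass:I} (which bounds both $\W^2(\dens_\tau^{\minus 1},\dens_\tau^0)$ and $\tU_m(\dens_\tau^0)$), the quadratic growth bounds of Assumption \ref{ass:VW}, and the elementary estimate $\M(\dens_\tau^0) \leq 2\M(\dens^0) + 2d_3\tau$, the right hand side $G_0$ is controlled by a $\tau$-independent constant $C_0$ depending only on $d_1,d_3,d_4$ and $\M(\dens^0)$.

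To extract the three claimed bounds, I plug in the Carleman-type lower bound $\nrg(\rho)\geq -C_1(1+\M(\rho))$ proved inside Lemma \ref{lem:lower} to obtain $A_N \leq C_0 + C_1(1+\M(\rN))$. Iterating the triangle inequality $\sqrt{\M(\rk)} \leq \sqrt{\M(\rkm)} + \W(\rkm,\rk)$ and applying Cauchy--Schwarz gives
\[
\M(\rN) \leq 2\M(\dens_\tau^0) + 8(N\tau)A_N \leq C_2 + 8TA_N.
\]
Combining the last two inequalities yields the self-consistent estimate $(1-8C_1T)A_N \leq C_3$, which closes all three bounds at once provided $T < 1/(8C_1)$: the second-moment bound follows from the previous display, and $|\tU_m(\rN)|\leq C$ from the decomposition $\tU_m = \nrg - \tV - \tW$ together with the two-sided inequality $-C_1(1+\M(\rN)) \leq \nrg(\rN) \leq G_0$ and the quadratic growth of $V$ and $W$. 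The desired kinetic-energy bound is just $\sum_{k=0}^N \frac{1}{2\tau}\W^2(\rkm,\rk) \leq \frac{d_3}{2} + 2A_N$ by Assumption \ref{ass:I}.

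For arbitrary $T > 1/(8C_1)$, I partition $[0,T]$ into finitely many subintervals of length $T_0 < 1/(8C_1)$ and iterate the argument piece by piece. At each transition time $K\tau$ separating two consecutive subintervals, the output of the previous iteration supplies bounds on $\tU_m(\dens_\tau^{K\minus 1}),\tU_m(\dens_\tau^K)$ and on $\W^2(\dens_\tau^{K\minus 1},\dens_\tau^K) \leq 4\tau A_K$, which play the role of Assumption \ref{ass:I} for the continuation with new constants depending on the cumulative time but still independent of $\tau$. After a finite number of iterations the three bounds extend to all of $[0,T]$. The main obstacle is precisely the coupling between $A_N$ and $\M(\rN)$ with a prefactor proportional to $T$: each quantity appears linearly in the bound on the other, which prevents closing the estimates in one shot for large $T$ and forces the short-interval iteration. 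This is harmless since the theorem only requires constants depending on $T$.
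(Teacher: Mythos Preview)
Your argument is correct but follows a genuinely different route from the paper. Both proofs start from the telescoped dissipation inequality \eqref{eq:EnergyDim} and the lower bound $\nrg(\rho)\ge -C_1(1+\M(\rho))$ from Lemma \ref{lem:lower}; the divergence is in how the coupling between the kinetic sum and the second moment is closed. The paper derives the pointwise estimate $\M^2(\rk)-\M^2(\rkm)\le 2\W(\rkm,\rk)\M(\rk)$, splits with a Cauchy inequality using the fixed parameter $\tau_*$ to get
\[
\M^2(\rN)\le C + \frac{4\tau}{\tau_*}\sum_{k=1}^N \M^2(\rk),
\]
and applies a discrete Gronwall argument to obtain $\M^2(\rN)\le C\exp(8T/\tau_*)$ directly for arbitrary $T$. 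You instead iterate the triangle inequality for $\sqrt{\M}$ and Cauchy--Schwarz to get $\M(\rN)\le 2\M(\dens_\tau^0)+8(N\tau)A_N$, couple this linearly with $A_N\le C_0+C_1(1+\M(\rN))$, and solve the resulting $2\times2$ system under the smallness condition $8C_1T<1$; the extension to large $T$ is then done by a finite restart procedure. Both approaches produce constants growing exponentially in $T$, and the threshold $T_0=1/(8C_1)$ in your iteration depends only on $d_1,d_2$, so it does not degenerate as the restart constants grow. The paper's Gronwall route is cleaner in that it handles all $T$ in one pass, while your route is slightly more elementary (no Gronwall lemma needed) at the cost of the bookkeeping for the restarts.
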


\begin{proof}[Proof of Theorem \ref{thm:bounds}]
Sum up inequalities \eqref{eq:EnergyDim} for $k=1$ to $K=N$ to obtain after cancellation:
  \begin{align}
    \label{eq:bound1}
    \nrg(\dens_\tau^N)+\frac1{4\tau}\sum_{k=1}^N\W^2(\rkm,\rk)
    \le \nrg(\dens_\tau^0)+\frac1{4\tau}\W^2(\dens_\tau^{\minus 1},\dens_\tau^{0}).
  \end{align}
  Next, we want to prove the auxiliary inequality
  \begin{align}\label{eq:bound2}
  	\M^2(\rk) -\M^2(\rkm) \leq 2 \W(\rkm,\rk)\M(\rk).
  \end{align}
  Without loss of generality we assume $\M(\rkm)\geq\M(\rk)$, otherwise the equality is always true. 
  We use the binomial formula to obtain
  \begin{align*}
  	\M^2(\rk) -\M^2(\rkm) = (\M(\rk) +\M(\rkm))(\M(\rk) -\M(\rkm)) \leq 2 \M(\rk) (\M(\rk) -\M(\rkm))
  \end{align*}
 Let $\delta_0$ be the discrete probability measure localized at $x=0$, then we have by the triangle inequality 
  \begin{align*}
  	\M(\dens) = \W(\dens, \delta_0) \leq  \W(\dens,\nu) + \W(\nu,\delta_0) = \W(\dens,\nu) + \M(\nu).
  \end{align*}	
This yields \eqref{eq:bound2}. 
A Cauchy type inequality with \eqref{eq:bound2} yields 
  \begin{align*}
  	\M^2(\dens_\tau^N)-\M^2(\dens_\tau^0) &= \sum_{k=1}^N \left[ \M^2(\rk)-\M^2(\rkm) \right] \leq  2 \sum_{k=1}^N \W(\rkm,\rk)\M(\rk)\\
  	&  \leq \frac{\tau_*}{4} \sum_{k=1}^N \frac{\W^2(\rkm,\rk)}{\tau} + \frac{4\tau}{\tau_*} \sum_{k=1}^N \M^2(\rk).
  \end{align*} 
  Substitute \eqref{eq:bound1} into this inequality:
  \begin{align*}
  	\M^2(\dens_\tau^N) \leq \M^2(\dens_\tau^0) + \tau_* \left( \nrg(\dens_\tau^0)+\frac1{4\tau}\W^2(\dens_\tau^{\minus 1},\dens_\tau^{0}) -\nrg(\dens_\tau^N) \right) + \frac{4\tau}{\tau_*} \sum_{k=1}^N \M^2(\rk).
  \end{align*}
  The first term of the right-hand side is estimated by
  \begin{align}\label{eq:eq1}
  	\M(\dens_\tau^0)\leq 2 \W(\dens_\tau^0,\dens^0) + \M(\dens^0) \leq 2 d_3 \sqrt\tau + 2\M(\dens^0).
  \end{align}
  The energy $\nrg$ evaluated at $\dens_\tau^0$ is estimated using Assumptions \ref{ass:VW} and \ref{ass:I} and estimate \eqref{eq:eq1}:
  \begin{align}
  	\nrg(\dens_\tau^0)= \tU_m(\dens_\tau^0) + \tV(\dens_\tau^0) + \tW(\dens_\tau^0) \leq d_4 + \frac32d_1(1+ \M(\dens_\tau^0)) \leq d_4 + \frac32d_1(1+ 2 d_3 \sqrt\tau + 2\M(\dens^0)). \label{eq:eq2}
  	  	\end{align}
  A lower bound of the energy $\nrg$ evaluated at $\dens_\tau^N$ is derived by the same way as in the prove of Lemma \ref{lem:lower}, i.e., there exist constants $d_2$ and $\gamma \in(\frac{d}{d+1},1)$ such that
    \begin{align*}
  	\nrg(\dens_\tau^N) \geq - d_2(1+\M(\dens_\tau^N))^\gamma- \frac32 d_1(1+\M(\dens_\tau^N)) \geq - ( d_2 + \frac32 d_1)( 2+\M^2(\dens_\tau^N))  .
  \end{align*}
  Hence, there is a universal constant $C$, not depending on the step size $\tau$, such that
  \begin{align*}
  	 	\M^2(\dens_\tau^N) \leq C + \tau_*( d_2 + \frac32 d_1) \M^2(\dens_\tau^N)  + \frac{4\tau}{\tau_*} \sum_{k=1}^N \M^2(\rk).
  \end{align*}
  We rearrange terms and use the definition of \eqref{eq:taustar} to arrive at the time-discrete Gronwall inequality
   	\begin{align*}
   		\M^2(\dens_\tau^N) \leq 2 C   + \frac{8\tau}{\tau_*} \sum_{k=1}^N \M^2(\rk).
   	\end{align*}
  By induction on $N$ we obtain 
  \begin{align*}
  	\M^2(\dens_\tau^N) \leq C \left(1+ \frac{8\tau}{\tau_*}\right)^N \leq \hat C \exp\left( \frac{8 N \tau}{\tau_*} \right) \leq \hat C \exp \left( \frac{8 T}{\tau_*} \right).
  \end{align*}
So the second moments $\M(\dens_\tau^N)$ of the discrete solution are uniformly bounded independent of the step size $\tau \in(0,\tau_*)$ and for all $N\in\N$ with $N\tau<T$. 

The remaining estimates can be derived from this. 
An upper bound for the energy $\nrg(\dens_\tau^N)$ follows from \eqref{eq:bound1} and \eqref{eq:eq2} combined with Assumption \ref{ass:VW}.
The lower bound on $\nrg(\dens_\tau^N)$ follows by the lower bounds on $\tU_m, \tV$, and $\tW$ in terms of the second moment.
Hence, the boundedness of $\nrg(\dens_\tau^N),\tV(\dens_\tau^N)$, and $\tW(\dens_\tau^N)$ yields the boundedness of $\tU_m(\dens_\tau^N)$.
The upper bound for the kinetic energy follows from the lower bound for the energy $\nrg(\dens_\tau^N)$, \eqref{eq:bound1}, and \eqref{eq:eq2} combined with Assumption \ref{ass:VW}.
\end{proof}

\section{Auxiliary theorems}\label{app:Aubin}
The following theorem is an extension of the Aubin-Lions Theorem for Banach-spaces proven in \cite{rossi2003tightness}. This theorem is the main tool to prove the convergence of the interpolated solution. 

\begin{thm}[Extension of the Aubin-Lions Theorem] \label{thm:aubin}
Let $\X$ be a separable Banach space, $\mathcal{A}:\X \rightarrow \Ru$ be measurable, lower semi-continuous and with compact sublevels in $\X$, and $g: \X\times \X \rightarrow \Ru$ be lower semi-continuous and such that $g(u,v)=0$ for $u,v \in \mathcal{D}( \mathcal{A})$ implies $u=v$. Let $\left(u_n\right)_\ninN$ be a sequence of measurable functions $u_n:(0,T)\to\X$ such that
	\begin{align}\label{eq:aubin}
		\sup_{\ninN} \int_0^T \calA(u_n(t)) \dd t< \infty, \qquad  \lim_{h \searrow 0} \limsup_{n\to\infty} \frac{1}{h}\int_0^h  \int_0^{T-t} g\left( u_n(s+t),u_n(s) \right) \dd s \dd t =0.
	\end{align}
Then, $\left(u_n\right)_\ninN$ possesses a subsequence converging in $\mathcal{M} \left(0,T;\X\right)$ to a limit curve $u_*:[0,T]\to \X$, i.e., 
\begin{align*}
	\lim_\ntoinf \mathcal{L}^1( \left\{ t\in(0,T) \mid \left\| u_n(t) - u_*(t) \right\|_\X \geq \sigma \right\}) = 0 \qquad \text{for all } \sigma >0.
\end{align*}
\end{thm}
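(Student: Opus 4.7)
The plan is to prove compactness of $(u_n)_\ninN$ in $\mathcal{M}(0,T;\X)$ by combining a spatial tightness derived from the integrability of $\calA$ with a temporal equicontinuity derived from the pseudo-distance $g$; these two ingredients play the respective roles of ``compact embedding'' and ``control of the time derivative'' in the classical Aubin--Lions theorem.

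\emph{Spatial tightness and pointwise extraction.} Setting $M:=\sup_n\int_0^T \calA(u_n(t))\,dt<\infty$, Markov's inequality yields $|\{t\in(0,T):\calA(u_n(t))>R\}|\le M/R$ uniformly in $n$. Since the sublevels $K_R:=\{\calA\le R\}$ are compact in $\X$ by hypothesis, outside a ``bad'' set of arbitrarily small measure all values $u_n(t)$ are confined to the compact set $K_R\subset\X$. A standard Cantor diagonal argument then produces a (non-relabelled) subsequence converging in $\X$ at every point of a countable dense set $D\subset(0,T)$; to choose $D$ compatibly with the good sets of every $n$, I would work either with Steklov averages $(1/\delta)\int_t^{t+\delta} u_n(s)\,ds$ or select $D$ inside the set of common Lebesgue points of $t\mapsto \calA(u_n(t))$, which has full measure.

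\emph{Conversion lemma -- the main obstacle.} The crux is to convert the control that $g$ provides on time increments into a genuine $\X$-norm control. I would establish the following quantitative statement: for every $\eps,R>0$ there exists $\delta>0$ such that $u,v\in K_R$ and $g(u,v)<\delta$ imply $\|u-v\|_\X<\eps$. The argument is by contradiction: if it failed, one could extract $u_k,v_k\in K_R$ with $g(u_k,v_k)\to 0$ and $\|u_k-v_k\|_\X\ge\eps$; compactness of $K_R$ yields subsequential limits $u_\infty,v_\infty\in K_R\subset\mathcal{D}(\calA)$; lower semi-continuity of $g$ then forces $g(u_\infty,v_\infty)=0$; and the separation hypothesis on $g$ finally implies $u_\infty=v_\infty$, contradicting $\|u_k-v_k\|_\X\ge\eps$. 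This is the step where both structural assumptions on $g$ (lower semi-continuity and point-separation on $\mathcal{D}(\calA)$) and the compact-sublevel property of $\calA$ are genuinely needed together.

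\emph{Upgrade to convergence in measure.} With the conversion lemma, the temporal hypothesis
\[
\lim_{h\searrow 0}\limsup_{n\to\infty}\frac1h\int_0^h\int_0^{T-t}g(u_n(s+t),u_n(s))\,ds\,dt=0
\]
translates, on each ``good'' set $\{t:\calA(u_n(t))\le R\}$, into the Fréchet--Kolmogorov-type equicontinuity criterion for compactness in measure with respect to $\|\cdot\|_\X$. Combined with the pointwise convergence on the dense set $D$, a standard $\eps/3$ splitting -- removing the bad set (of uniformly small measure) and interpolating on the good set through a nearby point of $D$ via the $g$-equicontinuity and the conversion lemma -- yields convergence in measure on the good set. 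Letting $R\to\infty$ along a further diagonal extraction produces the desired subsequence converging in $\mathcal{M}(0,T;\X)$. The whole argument thus reduces, once the conversion lemma is secured, to a fairly routine tightness-in-measure computation; the conversion lemma itself is the single non-trivial ingredient and is precisely where the abstract hypotheses on $\calA$ and $g$ are leveraged in concert.
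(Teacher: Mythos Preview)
The paper does not actually prove this theorem: it is stated in Appendix~B as a result ``proven in \cite{rossi2003tightness}'' (Rossi--Savar\'e), with only a remark that in that proof the usual weak integral equicontinuity condition can be replaced by the relaxed averaged one appearing here. So there is no in-paper proof to compare against; the benchmark is the Rossi--Savar\'e argument.

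Your sketch captures the two genuine ideas of that argument: Markov's inequality on $\int\calA(u_n)$ to produce uniform tightness in $\X$ via the compact sublevels $K_R$, and the ``conversion lemma'' (which is exactly the role of \cite[Lemma~4]{rossi2003tightness}) translating $g$-smallness into $\|\cdot\|_\X$-smallness on $K_R$. That lemma and its proof by contradiction are correct and are indeed where all the structural hypotheses on $\calA$ and $g$ are used together.

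Where your sketch is thinner than the actual proof is the passage from the \emph{averaged} temporal condition to compactness in measure. Extracting convergence on a countable dense set $D$ is delicate because the ``good'' sets $\{\calA(u_n(\cdot))\le R\}$ vary with $n$, and the $g$-equicontinuity you have is only an average over $t\in(0,h)$, not a bound for each fixed shift $h$; so the $\eps/3$ interpolation through a nearby point of $D$ does not go through directly. The route in \cite{rossi2003tightness} avoids this by working with a metric for convergence in measure (e.g.\ $d(u,v)=\int_0^T \min(1,\|u-v\|_\X)\,dt$) and proving total boundedness: one approximates $u_n$ in this metric by its restriction to $K_R$, then uses the conversion lemma together with a Fr\'echet--Kolmogorov-type criterion in which the averaged condition is precisely what is needed. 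Your outline is essentially correct in spirit, but the ``routine tightness-in-measure computation'' you allude to is where the averaged hypothesis has to be handled carefully, and the dense-set/$\eps/3$ packaging you propose would need to be replaced by (or reduced to) that metric argument.
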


\begin{remark}
	Note we replaced the usual weak integral equicontinuity condition
	\begin{align*}
		\lim_{h \searrow 0} \sup_{\ninN}  \int_0^{T-h} g\left( u_n(t+h),u_n(t) \right)\dd t =0,
	\end{align*}
	given in the original version of the theorem. In the proof it is sufficient to have the relaxed averaged weak integral equicontinuity, given in the theorem above. 
\end{remark}
	
\end{appendix}

\bibliographystyle{abbrv}
   \bibliography{thlit}

\end{document}